\documentclass[aps,pre,reprint,longbibliography,superscriptaddress,noshowpacs,floatfix]{revtex4-2}
\usepackage{amsmath,amssymb,amsthm,mathrsfs,}
\usepackage{graphicx}
\usepackage{tikz-cd}
\usepackage{bm}
\usepackage{bbm}
\usepackage{comment}
\usepackage[colorlinks=true, allcolors=blue]{hyperref}
\usepackage{ragged2e}
\usepackage{enumitem}

\newtheorem{theorem}{Theorem}
   
   \newtheorem{lemma}{Lemma}
   \newtheorem*{lemma*}{Lemma}

\theoremstyle{definition}
   
   \newtheorem{example}[theorem]{Example}
\theoremstyle{remark}    
  \newtheorem{remark}[theorem]{Remark}

\newcommand*{\sbar}{\skew{3}{\bar}}
\newcommand*{\sdot}{\skew{3}{\dot}}
\DeclareMathAlphabet{\mathpzc}{OT1}{pzc}{m}{it}
\DeclareMathOperator{\tr}{tr}

\DeclareMathOperator{\diag}{diag}

\DeclareMathOperator{\Imag}{Im}
\DeclareMathOperator{\Arg}{Arg}

\DeclareMathOperator{\SL}{SL}
\DeclareMathOperator{\PSL}{PSL}

\DeclareMathOperator{\SU}{SU}
\DeclareMathOperator{\PSU}{PSU}
\DeclareMathOperator{\Aut}{Aut}

\newcommand{\dif}{\mathop{}\!\mathrm{d}}

\makeatletter
\newcommand{\spx}[1]{%
  \if\relax\detokenize{#1}\relax
    \expandafter\@gobble
  \else
    \expandafter\@firstofone
  \fi
  {^{#1}}%
}
\makeatother

\newcommand\pd[3][]{\frac{\partial\spx{#1}#2}{\partial#3\spx{#1}}}

\newcommand{\genericdel}[4]{%
  \ifcase#3\relax
  \ifx#1.\else#1\fi#4\ifx#2.\else#2\fi\or
  \bigl#1#4\bigr#2\or
  \Bigl#1#4\Bigr#2\or
  \biggl#1#4\biggr#2\or
  \Biggl#1#4\Biggr#2\else
  \left#1#4\right#2\fi
}

\newcommand{\sVert}[1][0]{%
  \ifcase#1\relax
  \rvert\or\bigr|\or\Bigr|\or\biggr|\or\Biggr
  \fi
}

\begin{document}

% Title, authors and affiliations
\title{Operator-theoretic approach to the partial integration\\ of randomly coupled phase oscillators}

\author{Vincent Thibeault}\email[]{vincent.thibeault.1@ulaval.ca}
\affiliation{D\'epartement de physique, de g\'enie physique et d'optique, Universit\'e Laval, Qu\'ebec (Qc), Canada}% G1V 0A6}
\affiliation{Centre interdisciplinaire en mod\'elisation math\'ematique de l'Universit\'e Laval, Qu\'ebec (Qc), Canada} % G1V 0A6}

\author{Benjamin Claveau}%\email[]{benjamin.claveau.1@ulaval.ca}
\affiliation{D\'epartement de physique, de g\'enie physique et d'optique, Universit\'e Laval, Qu\'ebec (Qc), Canada}% G1V 0A6}
\affiliation{Centre interdisciplinaire en mod\'elisation math\'ematique de l'Universit\'e Laval, Qu\'ebec (Qc), Canada} % G1V 0A6}

\author{Antoine Allard}% \email[]{antoine.allard@phy.ulaval.ca}
\affiliation{D\'epartement de physique, de g\'enie physique et d'optique, Universit\'e Laval, Qu\'ebec (Qc), Canada}% G1V 0A6}
\affiliation{Centre interdisciplinaire en mod\'elisation math\'ematique de l'Universit\'e Laval, Qu\'ebec (Qc), Canada} % G1V 0A6}

\author{Patrick Desrosiers}%\email[]{patrick.desrosiers@phy.ulaval.ca}
\affiliation{D\'epartement de physique, de g\'enie physique et d'optique, Universit\'e Laval, Qu\'ebec (Qc), Canada} % G1V 0A6}
\affiliation{Centre interdisciplinaire en mod\'elisation math\'ematique de l'Universit\'e Laval, Qu\'ebec (Qc), Canada}% G1V 0A6}
\affiliation{Centre de recherche CERVO, Qu\'ebec (Qc), Canada}% G1E 0N5

\begin{abstract}

In our previous work~\cite{Thibeault2026}, we adopted Koopman theory to link the existence of different constants of motion to the presence of specific network motifs of Kuramoto oscillators. Yet, it remains to be shown how the partial integration can be carried out using the Koopman generator and its eigenfunctions. In this paper, we construct a random graph from network motifs that admit Koopman eigenfunctions and conserved quantities, and use it to define a partially integrable Kuramoto model. We perform the partial integration of the introduced model when there are monomial eigenfunctions and conserved cross-ratios, while providing an operator-theoretic derivation of the Watanabe-Strogatz transformation based on Magnus expansion and a recent result on closed forms of the Baker-Campbell-Hausdorff formula~\cite{Matone2015}. % We conclude with an example of the partial integration. % leads to only three equations. 
\end{abstract}

%\pacs{64.60.aq}
\maketitle

\let\oldaddcontentsline\addcontentsline% Store \addcontentsline
\renewcommand{\addcontentsline}[3]{}% Make \addcontentsline a no-op

\twocolumngrid

\section*{Introduction}

From pacemaker cells and neurons to fireflies and orchestras, nature’s constituents often tend
to synchronize over some period of time~\cite{Winfree2000, Pikovsky2003, Strogatz2003, Izhikevich2007, Boccaletti2018}. The construction of mathematical models has been an important step in finding rigorous insights on synchronization over the years, notably with the work of Winfree~\cite{Winfree1967}, Kuramoto~\cite{Kuramoto1975}, Ermentrout and Kopell~\cite{Ermentrout1986}. These models are nonlinear, and synchronization emerges as one of the most remarkable
collective phenomena they can describe. They exhibit a great richness of oscillatory behaviors and attempts to find analytical solutions for the differential equations describing the models are generally in vain. But there are exceptions.

In 1993 and 1994, Watanabe and Strogatz made pioneering developments in the (partial) integrability of identical phase oscillators with the form $\dot\theta_j = f(\bm{\theta}) + g(\bm{\theta})\cos\theta_j + h(\bm{\theta})\sin\theta_j$, where $j\in\{1,...,N\}$, $\theta_j(t)$ is the $j$-th oscillator phase at time $t$, $\dot\theta_j$ is its derivative with respect to the time $t$, $\bm{\theta} = (\theta_1,...,\theta_N)$, and $f, g, h$ are $2\pi$-periodic functions in each of their argument~\cite{Watanabe1993, Watanabe1994}. They found an astute transformation---generally called the Watanabe-Strogatz (WS) transformation---that allows reducing the number of equations to 3 with $N - 3$ constants of motion, along with conditions leading to complete integrability. The transformation is defined as
\begin{align}\label{eq:ws}
    \tan[{\textstyle{\frac{1}{2}}}(\theta_j - \Theta)] = \sqrt{\frac{1 + \gamma}{1 - \gamma}}\,\tan[{\textstyle{\frac{1}{2}}}(\psi_j - \Psi)]\,, %\tan[{\textstyle{\frac{1}{2}}}(\theta_j(t) - \Theta(t))] = \sqrt{\frac{1 + \gamma(t)}{1 - \gamma(t)}}\,\tan[{\textstyle{\frac{1}{2}}}(\psi_j - \Psi(t))]\,, 
\end{align}
where $\psi_1,...,\psi_N$ are constants and the functions of time $\gamma, \Theta, \Psi$ are determined by solving their differential equations, obtained by applying the coordinate change~\eqref{eq:ws} to the phase dynamics. 

In response to these findings, Goebel observed that the introduced class of identical oscillators is in fact linked to a particular kind of coupled ``Riccati equations'' and that the WS transformation is ultimately related to a M\"{o}bius transformation~\cite{Goebel1995}. It was then clarified that the constants of motion are cross-ratios and that the WS transformation is equivalent to a disk automorphism~\cite{Marvel2009}. This inspired researchers to use projective geometry to perform the partial integration~\cite{Stewart2011}, to make connections with hyperbolic geometry~\cite{Chen2017b, Chen2019, Lipton2021}, and to broaden the applicability of WS theory (e.g., the work of Pikovsky \& colleagues~\cite{Pikovsky2008, Pikovsky2011, Vlasov2016a} and Lohe~\cite{Lohe2017, Lohe2018, Lohe2019, Lohe2020, Lohe2025, Lohe2025f}). However, open questions remain regarding the partial integrability of phase oscillator dynamics, which is still an active research area~\cite{Chen2017b, Lipton2021, Thumler2023, Cestnik2024}.

For phase dynamics with non-identical frequencies, noise, and on heterogeneous graphs, exact partial integration is generally not possible. This has motivated the development of a wide range of approximate dimension-reduction methods. In the infinite-size limit, the dominant approach is to use the Ott-Antonsen Ansatz or its variants~\cite{Ott2008, Abrams2008, Chen2017, Bick2020, Engelbrecht2020, Cestnik2022b, Cestnik2022c}. For noisy oscillators, methods include the use of cumulants~\cite{Tyulkina2018,Cestnik2022b, Goldobin2018, Goldobin2019,Goldobin2019oa, Goldobin2021} and the recent mesoscopic theory of Buendía~\cite{Buendia2025}. Collective coordinates \cite{Gottwald2015, Gottwald2017} and different spectral methods~\cite{Gfeller2007, Gfeller2008, Kalloniatis2010, Thibeault2020, Timofeyev2025} have also been developed in the finite-size limit.

Less attention has been devoted to the partial integrability of phase oscillators with heterogeneous connections. It is known that WS transformations can be applied to all-to-all coupled communities~\cite{Pikovsky2008, Hong2011_pre} and to peripheries of star graphs~\cite{Vlasov2015, Vlasov2015a, Vlasov2017, Xu2018, Xu2019, Chen2019a}, but other connectivity patterns can also enable partial integrability.

As we showed in Ref.~\cite{Thibeault2026} through Koopman theory~\cite{Koopman1931, Koopman1932, VonNeumann1932, Carleman1932, Budisic2012, Brunton2022, Mezic2004, Mezic2005, Mauroy2013, Mauroy2016, Mezic2019, Joseph2020}, various motifs allow constants of motion to exist for the Kuramoto model and these motifs occur in different empirical complex networks. From this perspective, different network motifs can not only exhibit distinct stability and synchronizability properties, as extensively shown in the literature~\cite{Ashwin1992, Golubitsky1999, *Golubitsky2002, *Golubitsky2006, *Golubitsky2023, Milo2002, Moreno2004, Lodato2007, DHuys2008, Morone2019a, *Morone2020, *Makse2025, Angulo2015, Schaub2016, Aguiar2018}, but can also possess distinct ``integrability properties''.

Yet, we did not show how to partially integrate the Kuramoto model within this Koopman framework. In this paper, we introduce a random graph based on motifs admitting different constants of motion, thereby obtaining a partially integrable Kuramoto model. We then show how to deduce the transformations (including WS transformations~\eqref{eq:ws}) to perform the partial integration of the introduced model. To this end, we use operator-theoretic tools, from monomial (Koopman) eigenfunctions to Magnus expansion and a closed-form Baker-Campbell-Hausdorff (BCH) formula recently introduced by Matone~\cite{Matone2015}. 

In Sec.~\ref{sec:different_descriptions}, we present different descriptions of the Kuramoto model, especially to introduce its operator-theoretic description with the Koopman generator. Then, we construct the partially integrable model, as summarized in Table~\ref{tab:random_graph}, by imposing the constraints that guarantee the presence of monomial eigenfunctions and conserved cross-ratios. Finally, we partially integrate the model by first treating the parts that admit monomial eigenfunctions [Sec.~\ref{subsec:partial_integration_monomial}] and then those that support conserved cross-ratios [Sec.~\ref{subsec:partial_integration_crossratio}]. The reduced model is finally presented in Table~\ref{tab:partially_integrated}.

\section{Operator-theoretic description of the Kuramoto model}
\label{sec:different_descriptions}

The typical description of the (generalized) Kuramoto model is 
\begin{equation}\label{eq:kuramoto}
    \dot{\theta}_j = \omega_j + \sum_{k \in\mathcal{V}} W_{jk} \sin(\theta_k - \theta_j - \alpha_{jk})\,,\quad j\in\mathcal{V}\,,
\end{equation}
where $\mathcal{V} = \{1,...,N\}$, $\omega_j\in\mathbb{R}$ is the $j$-th natural frequency, $W$ is a weight matrix where $W_{jk}\in\mathbb{R}$ is the weight of the connection from $k$ to $j$, and $-\pi/2 < \alpha_{jk}\leq \pi/2$ is the $(j,k)$-th element of the phase-lag matrix $\alpha$~\cite{Kuramoto1975, Sakaguchi1986, Rodrigues2016}. Without loss of generality, we set $W_{jj} = 0$ and $\alpha_{jj} = 0$ for all $j\in\{1,...,N\}$.

Setting $z_j = e^{i\theta_j}$ for all $j$ yields a more compact description of the above Kuramoto model, that is,
\begin{equation}\label{eq:kuramotoz}
    \dot{z}_j = p_j(\bm{z}) - \overline{p_j(\bm{z})}z_j^2\,,\quad p_j(\bm{z}) = \sum_{k \in\mathcal{V}} A_{jk}z_k\,,% \sum_{k \in\mathcal{V}} (A_{jk}z_k - \bar{A}_{jk}\bar{z}_kz_j^2)\,,   
\end{equation}
where, given $\bm{\omega} = (\omega_1,...,\omega_N)$ and $e^{-i\alpha} = (e^{-i\alpha_{jk}})_{j,k\in\mathcal{V}}$,
\begin{align}\label{eq:complex_matrix}
     A = \frac{1}{2}\left( W \circ e^{-i\alpha} + i\diag(\bm{\omega})\right)\,
\end{align}
describes the interactions of a complex-weighted graph~\cite{Bottcher2024}.

The level of synchrony between oscillators is measured through functions of the state variables, i.e., order parameters or observables (e.g., Kuramoto order parameter). For this reason, the time evolution of these observables is central to describing such collective phenomenon rather than the individual state of each oscillator. This approach is naturally aligned with Koopman theory, developed in the 1930s to formulate classical dynamics using linear operators on spaces of
observables---mirroring the structure of quantum mechanics~\cite{Koopman1931,Koopman1932, VonNeumann1932, Carleman1932, Budisic2012, Joseph2020, Brunton2022}. Under this perspective, a linear operator of interest is the Koopman generator
\begin{align}\label{eq:kooku}
    \mathcal{K} = \bm{p}(\bm{z})^\top \bm{L}_{-1} 
 - \overline{\bm{p}(\bm{z})}^\top \bm{L}_1\,, %\sum_{j,k \in\mathcal{V}}\left(A_{jk}z_k - \bar{A}_{jk}\bar{z}_kz_j^2\right)\partial_j\,,
 % \partial_t + 
\end{align}
where $\bm{p} = (p_1\;\cdots\;p_N)$ with $p_j$ as in Eq.~\eqref{eq:kuramotoz} and 
\begin{align*}
    \bm{L}_{n} = (z_1^{n-1}\partial_1\;\cdots\;z_N^{n-1}\partial_N)^\top\,,
\end{align*} %$\partial_t$ and 
while $\partial_j$ is the partial derivative with respect to $z_j$. The Koopman generator is a total time derivative and generates the time-evolution operator, that is, the Koopman operator~\cite{Budisic2012, Joseph2020, Brunton2022}. 

The linearity of such generator allows us to leverage spectral theory, i.e., find Koopman eigenvalues $\lambda$ and eigenfunctions $\psi$ satisfying $\mathcal{K}\psi = \lambda\psi$~\footnote{The spectral theory of such operators is a rich subject that draws on a range of tools from functional analysis. In this paper, however, we work only at a formal level: we do not address the regularity of the eigenfunctions, the convergence of the expansions, or the functional spaces on which the operator acts and in which the eigenfunctions are defined.}. In Ref.~\cite{Thibeault2026}, we have shown how to find network motifs admitting constants of motion, through (1) eigenfunctions of $\mathcal{K}$, (2)~the Lie-algebraic structure of $\mathcal{K}$, and (3) the Lie continuous symmetries, established through a commutation relation with~$\mathcal{K}$. Indeed, for (1), recall that an eigenfunction $C(\bm{z})$ of $\mathcal{K}$ with eigenvalue zero is a constant of motion of the dynamics. 

Moreover, if one finds two eigenfunctions $\psi_1, \psi_2$ of $\mathcal{K}$ with respective eigenvalues $\lambda_1, \lambda_2$, then $\psi_1^a\psi_2^b$ with $a,b\in\mathbb{C}$ is also an eigenfunction with eigenvalue $a\lambda_1 + b\lambda_2$. Thus, one can choose the vector $(a\,\,b)$ to be orthogonal to $(\lambda_1\,\,\lambda_2)$ to make $\psi_1^a\psi_2^b$ a constant of motion. We will use this classical approach~\cite{Darboux1878, Goriely2001, Zhang2017} in Sec.~\ref{subsec:partial_integration_monomial} to get monomial conserved quantities. Before that, we construct the partially integrable Kuramoto model admitting monomial eigenfunctions and conserved cross-ratios

\section{Construction of the partially integrable model}
In Ref.~\cite{Thibeault2026}, we have obtained the necessary and sufficient conditions to have monomial eigenfunctions of $\mathcal{K}$
\begin{align}
    \psi(\bm{z}) = z_1^{\mu_1}...z_N^{\mu_N} =: z^{\bm{\mu}}\,,\quad \bm{\mu}\in\mathbb{R}^N\,,
\end{align}
with eigenvalue $i\bm{\mu}^\top\bm{\omega}$ and conserved cross-ratios
\begin{equation}\label{eq:cross_ratio}
c_{abcd}(\bm{z}) = \frac{(z_{c}-z_{a})(z_{d}-z_{b})}{(z_{c}-z_{b})(z_{d}-z_{a})}%\,\, a,b,c,d \in \mathcal{V}
\end{equation}
for non-identical indices $a,b,c,d\in\mathcal{V}$ (an eigenfunction of $\mathcal{K}$ with null eigenvalue) for the Kuramoto model described in Sec.~\ref{sec:different_descriptions}. 

Using these conditions, which we will recall and adapt in the following subsections, we aim to construct a modular random graph of Kuramoto oscillators where each instance admits a fixed number of monomial eigenfunctions and conserved cross-ratios. 

More explicitly, we introduce the partition $P = \{\mathcal{M}_1,...,\mathcal{M}_m, \mathcal{C}_1,...,\mathcal{C}_c, \mathcal{P}\}$ of the vertex set $\mathcal{V}$ related to the graph with complex weight matrix $A$ [Eq.~\eqref{eq:complex_matrix}]. In the partition, a subset $\mathcal{M}_\tau$ of size $d_\tau$ admits one monomial eigenfunction, a subset $\mathcal{C}_\gamma$ of size $n_\gamma \geq 4$ admits $n_\gamma - 3$ conserved cross-ratios and the subset $\mathcal{P}$ of size $p$ does not admit any considered conserved quantities or eigenfunctions. The dimensions associated to the partition are summarized in Table~\ref{tab:random_graph}. 

The parts $\mathcal{M}_1,...,\mathcal{M}_m$ satisfy the conditions from the first theorem of Ref.~\cite{Thibeault2026} while the parts $\mathcal{C}_1,...,\mathcal{C}_c$ satisfy the third theorem of Ref.~\cite{Thibeault2026}. For the sake of completeness, let us recall the conditions and apply them to the respective parts.

\subsection{Monomial eigenfunctions}

A monomial eigenfunction $\psi(\bm{z}) = z^{\bm{\mu}}$ corresponds to an observable that evolves according to the linear equation $\dot{\psi} = i\tilde{\omega}\,\psi$, where $\tilde{\omega} = \bm{\mu}^\top \bm\omega$ defines a new frequency. Thus, the time evolution of the monomial eigenfunction is $\psi(\bm{z}(t)) = \exp(i\bm{\mu}^\top \bm\theta(0))\exp(i\tilde{\omega} \,t)$. Intuitively, this means that the observable just rotates independently at a frequency $\tilde{\omega}$ on the unit circle starting from $\exp(i\bm{\mu}^\top \bm\theta(0))$. The real form of this observable also gives another concrete perspective. Indeed, $\phi(t) := \Arg(\psi(\bm{z}(t))) = \bm{\mu}^\top\tilde{\bm{\theta}}(t)$, where $\tilde{\bm{\theta}}(t) = \bm\omega t + \bm{\theta}(0)$ is the solution of the non-interacting system (Eq.~\eqref{eq:kuramoto} with $W = 0$). Therefore, when there is a monomial eigenfunction, there is a reference frame rotating at frequency $\tilde{\omega}$ that freezes $\phi(t)$ in time.

In fact, having a monomial eigenfunction for each part implies the existence of constants of motion. We do not need to dive into this subject for the construction of the partially integrable model, so we provide the details later in Sec.~\ref{subsec:partial_integration_monomial}. Let us focus on the conditions to have such monomial eigenfunctions.

Let the part $\mathcal{M}_\tau \subset \mathcal{V}$ be such that $|\alpha_{jk}| < \pi/2$ for all $j,k\in\mathcal{M}_\tau$ and for all $\tau \in \{1,...,m\}$. Let $\bm \mu_\tau = (\mu_{\tau 1}\,\,\,\cdots\,\,\,\mu_{\tau N})^\top \in \mathbb{R}^N$ satisfy $\mu_{\tau j} \neq 0$ if and only if $j \in \mathcal{M}_\tau$.  The first theorem, reformulated from Ref.~\cite{Thibeault2026}, states that there exists a $\bm{\mu}_\tau$ such that $z^{\bm \mu_\tau}$ is an eigenfunction of $\mathcal{K}$ in Eq.~\eqref{eq:kooku} if and only if: 
\begin{enumerate}[label=\normalfont1.\arabic*.,ref=1.\arabic*]
    \item \label{itm:1.1} $W_{jk} = 0$ for all $j\in \mathcal{M}_\tau$ and $k\in \mathcal{V}\setminus\mathcal{M}_\tau$;
    \item \label{itm:1.2} $W_{jk}\neq 0$ whenever $W_{kj}\neq 0$ for all $j,k\in \mathcal{M}_\tau$ ;
    \item \label{itm:1.3} $W_{i_1 i_2} ... W_{i_{\eta-1}i_{\eta}}W_{{i_{\eta} i_1}} = W_{{i_1 i_{\eta}}}W_{i_{\eta}i_{\eta-1}} ... W_{i_2 i_1}$\\ for all sequences $i_1, i_2, ..., i_\eta$ of elements of $\mathcal{M}_\tau$;
    \item \label{itm:1.4} 
    $\alpha_{jk} = -\alpha_{kj}$ whenever  $j,k\in \mathcal{M}_\tau$,  $j\neq k$, $W_{jk}\neq 0$.
\end{enumerate}
If $z^{\bm{\mu}_\tau}$ is an eigenfunction, then its eigenvalue is $i\bm{\mu}_\tau^\top\bm{\omega}$.

The first condition implies that the subgraph with vertex set $\mathcal{M}_\tau$ is a source within the whole graph. In matrix terms, the second and third conditions are equivalent to the condition that the submatrix related to $\mathcal{M}_\tau$ is symmetrizable, i.e., the submatrix can be made symmetric by multiplying each of its rows by a respective constant. The fourth condition ensures that the phase-lag submatrix related to $\mathcal{M}_\tau$ is skew-symmetric. 

When a weight matrix $W$ of a connected graph is given, the exponents of the monomials are easily constructed. By starting from some $\ell\in\mathcal{M}_\tau$, choose an initial nonzero $\mu_{\tau\ell}$ and walk through the subgraph induced by $\mathcal{M}_\tau$, setting iteratively \mbox{$\mu_j = (W_{k j}/W_{jk}) \mu_k$}. Yet, in this paper, we shall rather construct $W$ given the exponents and a symmetric matrix, thus making symmetrizable blocks in $W$ (Table~\ref{tab:random_graph} and Fig.~\ref{fig:matrix_example}(b)).

\subsection{Conserved cross-ratios}

A cross-ratio $c_{abcd}$ with $a,b,c,d \in \mathcal{C}_\gamma$ is conserved if and only if the vertices $a$, $b$, $c$, $d$ of the graph described by the complex matrix in Eq.~\eqref{eq:complex_matrix} have the same:
\begin{enumerate}[label=\normalfont2.\arabic*.,ref=2.\arabic*, itemsep=0pt, topsep=2pt, parsep=0pt, partopsep=0pt]
\item \label{itm:2.1} outgoing interactions within $\{a,b,c,d\}$, i.e.,
\begin{align*}
\begin{aligned}
    A_{ba} &= A_{ca} = A_{da} =: \mathcal{A}_{\gamma a}\,,\\ 
    A_{ab} &= A_{cb} = A_{db} =: \mathcal{A}_{\gamma b}\,, 
\end{aligned}
\qquad
\begin{aligned}
    A_{ac} &= A_{bc} = A_{dc} =: \mathcal{A}_{\gamma c}\,, \\
    A_{ad} &= A_{bd} = A_{cd} =: \mathcal{A}_{\gamma d}\,,
\end{aligned}
\end{align*}

\item  \label{itm:2.2} incoming interactions from $\mathcal{V}\setminus\{a, b, c, d\}$, i.e.,
\begin{equation*}
    A_{ak} = A_{bk} = A_{ck} = A_{dk}\,,\quad \forall k\in\mathcal{V}\setminus\{a, b, c, d\}\,,
\end{equation*}

\item \label{itm:2.3} shifted natural frequencies
\begin{align*}
\omega_j - 2\,\mathrm{Im}(\mathcal{A}_{\gamma j}) = \omega_k - 2\,\mathrm{Im}(\mathcal{A}_{\gamma k}),\quad \forall j,k \in \{a,b,c,d\}\,.
\end{align*}

\end{enumerate}

These three conditions are satisfied for $\mathcal{C}_\gamma$, for all $\gamma \in \{1,...,c\}$, leading to $n_\gamma - 3$ conserved cross-ratios for each part. 

Considering the partition $P$ along with the conditions~\ref{itm:1.1}-\ref{itm:1.4} and the conditions~\ref{itm:2.1}-\ref{itm:2.3} leads to the random matrix model described in Table~\ref{tab:random_graph}. The probability density functions for the random variables are arbitrary. For the matrix $C$, we have only added a Bernoulli matrix $Q$ to have the freedom to tune the graph density. 

Numerically, we choose to make the model a specific type of weighted stochastic block model, such that we can define the probabilities of connections between the parts $\mathcal{M}_1,...,\mathcal{M}_m, \mathcal{C}_1, ..., \mathcal{C}_c, \mathcal{P}$~\cite{Thibeault2026_koopman_kuramoto}.

\begin{table*}[t]
\caption{\\ 
Random matrix admitting $m$ monomial eigenfunctions and $\sum_{\gamma=1}^c(n_{\gamma} -3)$ conserved cross-ratios in the Kuramoto model~\eqref{eq:kuramoto}. An illustration of the matrix $W$ is given in Fig.~\ref{fig:matrix_example} and the code to generate instances of the random matrix is in Ref.~\cite{Thibeault2026_koopman_kuramoto}.}
\begin{ruledtabular}
\label{tab:random_graph}
\begin{tabular}{ c c }

\textbf{Model} & \vspace{-0.7cm}\\

\multicolumn{2}{c}{\parbox[t]{16cm}{\begin{align}\label{eq:complex_random_matrix} 
A = \frac{1}{2}\left( W \circ e^{-i\alpha} + i\diag(\bm{\omega})\right)\,,\quad \text{where}\,\quad W = \begin{pmatrix}
        D^{-1}S\\
        MC^\top\\ 
        *
    \end{pmatrix}\,,\quad \alpha = \begin{pmatrix}
        \kappa\\
        M\chi^\top\\ 
        *
    \end{pmatrix}\,,\quad \bm{\omega} = (\omega_j)_{j\in\mathcal{V}}
\end{align}}}

\vspace{-0.6cm}\\ 

\\
\textbf{Dimension} &
\\

\parbox[t]{1cm}{$m$} & \parbox[t]{15cm}{\justifying \noindent Number of motifs admitting a monomial eigenfunction}\\
\parbox[t]{1cm}{$c$} & \parbox[t]{15cm}{\justifying \noindent Number of motifs admitting conserved cross-ratios}\\
\parbox[t]{1cm}{$p$} & \parbox[t]{15cm}{\justifying \noindent Number of vertices in the non-integrable part $\mathcal{P}$}\\
\parbox[t]{1cm}{$d_\tau$} & \parbox[t]{15cm}{\justifying \noindent Number of vertices in the $\tau$-th motif $\mathcal{M}_{\tau}$ admitting a monomial eigenfunction with $\tau\in \{1,...,m\}$} \\
\parbox[t]{1cm}{$n_{\gamma}\geq 4$} & \parbox[t]{15cm}{\justifying \noindent Number of vertices in the $\gamma$-th motif $\mathcal{C}_\gamma$ admitting conserved cross-ratios with $\gamma \in \{1,...,c\}$}\\
\parbox[t]{1cm}{$N$} & \parbox[t]{15cm}{\justifying \noindent Total number of vertices in $\mathcal{V} = \bigcup_{\tau}\mathcal{M}_{\tau}\cup\bigcup_{\gamma}\mathcal{C}_{\gamma}\cup\mathcal{P}$, equal to $\sum_\tau d_\tau + \sum_\gamma n_\gamma + p$}\\

\vspace{-0.1cm}
\\
\noindent\textbf{Matrix} &
\\
 \parbox[t]{1cm}{$D$} & \parbox[t]{15cm}{\justifying \noindent$\sum_\tau d_\tau \times \sum_{\tau}d_\tau$ invertible diagonal random matrix $\diag(\mu_{1,1}, ..., \mu_{1,d_1}, \mu_{2,d_1+1}, ..., \mu_{2, d_1 + d_2}, ..., \mu_{m, \sum_\tau d_\tau})$}\\ % \textcolor{red}{$\diag(\mu_{11}, ..., \mu_{1d_1}, ..., \mu_{m1}, ..., \mu_{md_m})$} with real random variables $\mu_{\tau, j}\neq 0$}
     \parbox[t]{1cm}{$S$} & \parbox[t]{15cm}{\justifying \noindent$\sum_{\tau}d_\tau\times N$ matrix $(B+B^\top \quad 0)$ formed by the concatenation of (1) a $\sum_{\tau}d_\tau\times \sum_{\tau}d_\tau$ block diagonal symmetric matrix $B+B^\top$, where $B$ is a real random block diagonal matrix with blocks of respective sizes $d_1,...,d_m$ and (2) a $\sum_{\tau}d_\tau\times (\sum_\gamma n_\gamma + p)$ matrix block of zeros}\\ %  to satisfy condition~1.1
     \parbox[t]{1cm}{$\kappa$} & \parbox[t]{15cm}{\justifying \noindent $\sum_{\mu}d_\mu \times N$ matrix $(\beta - \beta^\top\quad 0)$ formed by the concatenation of (1) a $\sum_{\tau}d_\tau\times \sum_{\tau}d_\tau$ block diagonal skew-symmetric matrix $\beta-\beta^\top$, where $\beta$ is a real random block diagonal matrix with blocks of respective sizes $d_1,...,d_m$ and elements satisfying $|\beta_{jk} - \beta_{kj}| < \pi/2$ and (2) a $\sum_{\tau}d_\tau\times (\sum_\gamma n_\gamma + p)$ matrix block of zeros}\\
     \parbox[t]{1cm}{$M$} & \parbox[t]{15cm}{\justifying \noindent$\sum_\gamma n_\gamma \times c$  membership matrix (binary, nonrandom, column-orthogonal) for the groups $\mathcal{C}_1,...,\mathcal{C}_c$, with single-membership assignment (each vertex/oscillator belongs to exactly one group)}\\
     \parbox[t]{1cm}{$C$} & \parbox[t]{15cm}{\justifying \noindent$N \times c$ real random matrix, e.g., $C = Q\circ R$ where $Q$ is a $N \times c$ Bernoulli matrix and $R$ is a $N \times c$ real random weight matrix} \\
     % Regarding the matrix $C$, one possible choice is $C = U\circ V$ where $U$ is a $N \times c$ Bernoulli random matrix (possibly defined by blocks) and $V$ is a $N \times c$ real random matrix with the weights defined from any desired distribution. 
    \parbox[t]{1cm}{$\chi$} & \parbox[t]{15cm}{\justifying \noindent$N \times c$ real random matrix, where $|\chi_{j\gamma}|<\pi/2$ for all $j\in\mathcal{V}$, $\gamma\in\{1,...,c\}$}\\
     \parbox[t]{1cm}{$\mathcal{A}$} & \parbox[t]{15cm}{\justifying \noindent $c \times N$ complex random matrix equal to $\frac{1}{2}C^\top\circ e^{-i\chi^\top}$}\\
     \parbox[t]{1cm}{$*$} & \parbox[t]{15cm}{\justifying \noindent Arbitrary block of size $p \times N$ related to the non-integrable part $\mathcal{P}$}
\vspace{0.2cm}\\

\noindent\textbf{Frequency} &
\\ %Equals $w_j +  W_{jj}\sin(\alpha_{jj})$, where the second term is only meant to cancel the diagonal contribution of $W$ and $\alpha$ in $A$. 
 \parbox[t]{1cm}{$\omega_j$} & \parbox[t]{15cm}{\justifying \noindent 
 If $j \in \mathcal{M}_\tau$ for $\tau\in\{1,...,m\}$ or $j\in \mathcal{P}$, $\omega_j$ is chosen at random with no restriction. For all $j\in \mathcal{C}_\gamma$ and $\gamma \in\{1,...,c\}$, $\omega_j = \omega_{\ell_\gamma} + 2\,\Imag(\mathcal{A}_{\gamma j} - \mathcal{A}_{\gamma \ell_\gamma})$ where $\ell_{\gamma}$ is any index within $\mathcal{C}_\gamma$ and $\omega_{\ell_\gamma}$ is random}
\end{tabular}
\end{ruledtabular}
\end{table*}
%\end{widetext}

\subsection{Partially integrable model}
\label{subsec:partially_integrable_model}
The partition and the random model for $A$ of Table~\ref{tab:random_graph} imply a separation of the dynamics such that the equations for the oscillators in the non-integrable part $\mathcal{P}$ remain unchanged, the equations admitting monomial eigenfunctions are
\begin{align}\label{eq:kumonomez}
    \dot{z}_j = i\omega_jz_j + \frac{1}{2\mu_{\tau j}} \sum_{k\in\mathcal{M}_{\tau}}S_{jk} (z_ke^{-i\kappa_{jk}} - \bar{z}_kz_j^2e^{i\kappa_{jk}})\,,
\end{align}
for all $j\in\mathcal{M}_\tau$ and $\tau\in\{1,...,m\}$, and the equations admitting conserved cross-ratios are
\begin{align}\label{eq:kucross}
    \dot{z}_j = i\Omega_{\gamma}z_j + \sum_{k=1}^N (\mathcal{A}_{\gamma k}z_k - \bar{\mathcal{A}}_{\gamma k}\bar{z}_kz_j^2)\,,
\end{align}
for all $j\in\mathcal{C}_\gamma$ and $\gamma\in\{1,...,c\}$, where condition~\ref{itm:2.3} has induced a new frequency $\Omega_{\gamma} =  \omega_{\ell_{\gamma}} - 2\,\mathrm{Im}(\mathcal{A}_{\gamma \ell_{\gamma}})$. The role of the natural frequency shifts in condition~\ref{itm:2.3} is now evident: they ensure that the oscillators in $\mathcal{C}_{\gamma}$ share a common effective natural frequency $\Omega_{\gamma}$ that depends on the original natural frequencies, the phase lags, and the weight matrix. This comes as no surprise: conditions~\ref{itm:2.1}-\ref{itm:2.3} imply that the dynamical equations in $\mathcal{C}_{\gamma}$ are identical, although the contribution of each oscillator $\mathcal{C}_{\gamma}$ can vary through the heterogeneity of their out degrees and their initial conditions~\cite{Thibeault2026}.

The Koopman generator for the oscillators in $\mathcal{C}_\gamma$ is 
\begin{align}\label{eq:kookucross}
     \mathcal{K}_{\gamma} &=  \rho_\gamma(\bm{z}) L_{-1}^{\gamma} + i \Omega_{\gamma} L_0^{\gamma} - \overline{\rho_\gamma(\bm{z})} L_1^{\gamma}\,,
\end{align}
where $L_n^{\gamma} = \sum_{j\in \mathcal{C}_\gamma} z_j^{n+1} \partial_j$ and
\begin{align}\label{eq:rho}
    \rho_{\gamma}(\bm{z}) = \sum_{k=1}^N \mathcal{A}_{\gamma k}z_k\,.
\end{align}
Through Eq.~\eqref{eq:kookucross}, it is clear that the cross-ratios related to each partially integrable part $\mathcal{C}_\gamma$ are constants of motion, since they are the joint invariants of $L_{-1}^\gamma, L_0^\gamma, L_1^\gamma$ for each~$\gamma$~\cite{Thibeault2026}. 

The partition thus ultimately leads to the separation of the Koopman generator as
\begin{align}\label{eq:splitted_generator}
    \mathcal{K} = \sum_{\tau=1}^m\mathcal{J}_{\tau} + \sum_{\gamma=1}^c\mathcal{K}_{\gamma} + \mathcal{K}_{\mathrm{NI}}\,,
\end{align}
where $\mathcal{J}_\tau$ is the Koopman generator of Eq.~\eqref{eq:kumonomez} admitting monomial eigenfunctions, $\mathcal{K}_{\gamma}$ is the Koopman generator of Eq.~\eqref{eq:kucross} admitting conserved cross-ratios, while $\mathcal{K}_{\mathrm{NI}}$ is the Koopman generator of the non-integrable part $\mathcal{P}$. This completes the definition of the partially integrable model and we now perform its partial integration.

\begin{figure*}
    \centering
    \includegraphics[width=\linewidth]{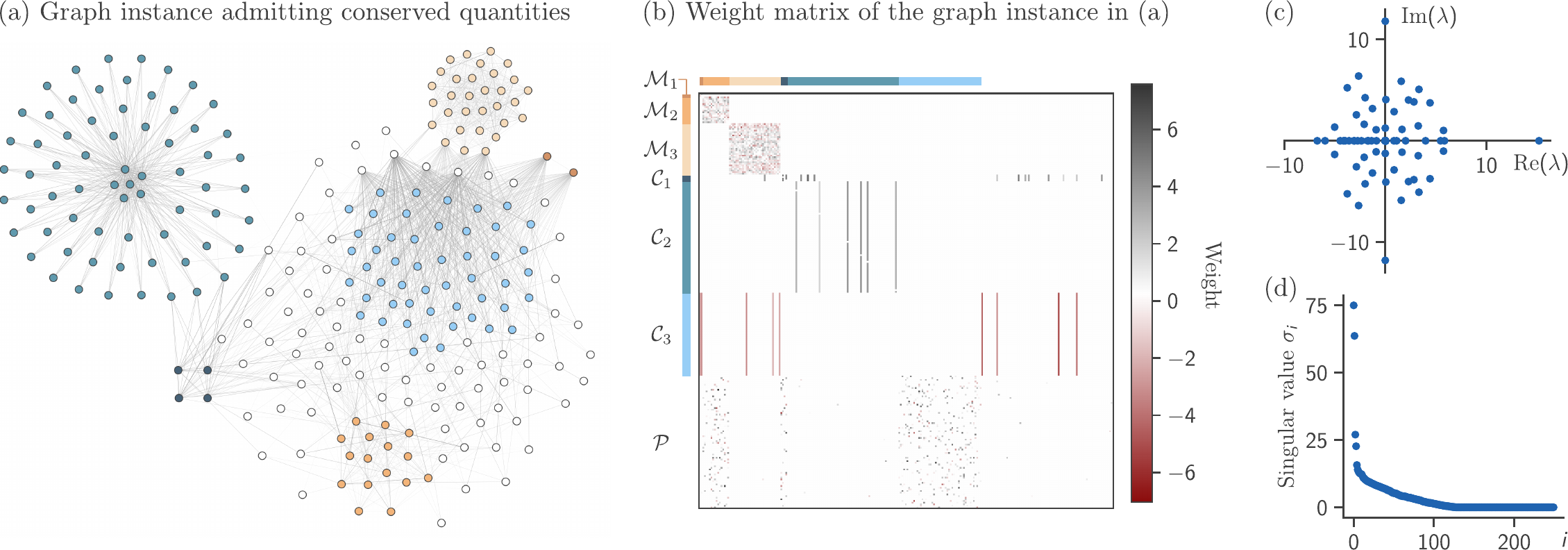}
    
    \vspace{-0.3cm}
    
    \caption{Illustration of (a) a modular, directed, weighted, and signed graph of $N=250$ vertices admitting 2 time-independent monomial constants of motion and 112 conserved cross-ratios. Its weight matrix $W$ in (b) is an instance of the random matrix model of Table~\ref{tab:random_graph} ($A$, $\alpha$, and $\bm{\omega}$ not shown in the figure). (c) The eigenvalues and (d) the singular values of the matrix (b).}
    \label{fig:matrix_example}
\end{figure*}

\section{Partial integration}
\label{sec:partial_integration}
Having now constructed a family of partially integrable Kuramoto models, let us proceed with their partial integration. We first begin by partially integrating the parts admitting a monomial eigenfunction, then we provide an operator-theoretic derivation of WS transformation to finally perform the partial integration of the parts with conserved cross-ratios. 

\subsection{Partial integration of monomial parts}
\label{subsec:partial_integration_monomial}
The existence of a monomial eigenfunction $z^{\bm \mu_{\tau}}$ implies that $z^{\bm{\mu}_{\tau}}e^{-i(\bm{\mu}_{\tau}^\top \bm{\omega})t}$ is conserved, where $i\bm{\mu}_{\tau}^\top \bm{\omega}$ is the related eigenvalue. If $\bm{\mu}_{\tau}^\top\bm\omega = 0$, the time dependence disappears and it is convenient to make a change of variables for the partial integration in such a way that no explicit time dependence is introduced in the vector field, that is, the dynamics remains autonomous.
But this is a specific case, there is no reason for the orthogonality condition $\bm{\mu}_\tau^\top\bm\omega = 0$ to hold in general. 

Yet, it is always possible to make products of different monomial eigenfunctions to obtain monomial constants of motion with no time dependence (as explained in Sec.~\ref{sec:different_descriptions})~\footnote{Another possibility to obtain $m$ constants of motion with no time dependence is that the oscillators in $\mathcal{M}_1,...,\mathcal{M}_m$ also satisfy the conditions of Theorem~2 in Ref.~\cite{Thibeault2026}. In this case, there is a Vandermonde-ratio eigenfunction for each part and one can form a different type of constant of motion, which we do not discuss further in this paper.}. Indeed, suppose that $\mathcal{K}$ admits $m \in \{1,...,N\}$ functionally independent monomial eigenfunctions $z^{\bm\mu_1}, ...,z^{\bm\mu_m}$, where $\bm{\mu}_{\tau}\in\mathbb{R}^N$ for each $\tau\in \{1,...,m\}$, whose corresponding eigenvalues are $\bm \lambda = (i\bm\mu_1^\top \bm{\omega}\,\,\, \cdots\,\,\, i\bm\mu_m^\top \bm{\omega})^\top$. If $\bm\lambda \neq \bm 0$, then the imaginary part of $\bm\lambda$ lies in $\mathbb{R}^m$
and has a $(m - 1)$-dimensional orthogonal complement $\mathscr{A}^\perp\subset \mathbb{R}^m$. One can thus choose $m-1$
linearly independent vectors $\bm a_1,\ldots,\bm a_{m-1}\in \mathscr{A}^\perp$ and get new exponents $\bm{\nu}_{1}, ..., \bm{\nu}_{m-1}$ defined by
\begin{align}\label{eq:nu}
    (\bm{\nu}_{1} \; \cdots \; \bm{\nu}_{m-1}) = (\bm{\mu}_{1} \; \cdots \; \bm{\mu}_{m}) (\bm{a}_{1} \; \cdots \; \bm{a}_{m-1})\,,
\end{align}
such that the corresponding monomials $z^{\bm\nu_1}$, ..., $z^{\bm\nu_{m-1}}$ are eigenfunctions with zero eigenvalues. In other words, they are $m-1$ functionally independent monomial constants of motion.

On the one hand, from the construction summarized in Table~\ref{tab:random_graph}, $(\bm{\mu}_{1} \; \cdots \; \bm{\mu}_{m})$ is a block diagonal matrix with diagonal blocks $(\mu_{1,1} \; \cdots \;\mu_{1,d_1})^\top$, $(\mu_{2,d_1+1}\; \cdots \;\mu_{2, d_1 + d_2})^\top$, and so on. On the other hand, there are several ways to choose $\bm{a}_{1},...,\bm{a}_{m-1}$ in Eq.~\eqref{eq:nu}. One of them is to define $\tilde{\omega}_\tau := \bm{\mu}_\tau^\top \bm{\omega}$ for all $\tau\in\{1,...,m\}$ and set
\begin{align}\label{eq:a}
    \bm{a}_\eta = \tilde{\omega}_{\eta + 1}\bm{e}_\eta - \tilde{\omega}_{\eta}\bm{e}_{\eta + 1}\,,\quad \forall \eta\in\{1,...,m-1\}\,,
\end{align}
where $\bm{e}_\iota$ is a $m$-dimensional unit vector with a 1 at the $\iota$-th component. Equations~(\ref{eq:nu}-\ref{eq:a}) thus imply that
\begin{align}\label{eq:nueta}
    \bm{\nu}_\eta = \tilde{\omega}_{\eta + 1}\bm{\mu}_\eta - \tilde{\omega}_{\eta}\bm{\mu}_{\eta + 1}\,,\quad \forall \eta\in\{1,...,m-1\}\,,
\end{align}
are the exponents of the conserved monomials $z^{\bm{\nu}_1}$,...,$z^{\bm{\nu}_{m-1}}$.

The choice of $\bm{a}_{1},...,\bm{a}_{m-1}$ directly affects the monomial exponents $\bm{\nu}_{1},...,\bm{\nu}_{m-1}$ and, consequently, the change of variables for the partial integration. To define such a change of variables, it is convenient to work with the real equations in terms of the phases and to set $m-1$ of the new variables to be functions of the conserved monomials. Indeed, since a function of a constant of motion is again a constant of motion, $-i\ln(z^{\bm{\nu}_\eta})$ is also conserved for all $\eta$. Recalling that $z_j = e^{i\theta_j}$, we have $-i\ln(z^{\bm{\nu}_\eta}) = \bm{\nu}_\eta^\top\bm\theta$, that is, a conserved linear observable in terms of the phases.

Consider the new variables $\phi_{j_\eta} := \bm{\nu}_\eta^\top\bm\theta$ for all $\eta\in\{1,...,m-1\}$ and for some index $j_\eta\in\mathcal{M}_{\eta}\cup \mathcal{M}_{\eta+1}$, say $j_\eta = \max\,\mathcal{M}_{\eta}$ (i.e., $j_\eta$ is the highest index in $\mathcal{M}_{\eta}$). From there, set $\mathcal{M} = \mathcal{M}_1\cup...\cup\mathcal{M}_m$, $\mathcal{I} = \{j_1,...,j_{m-1}\}$ and define the coordinate vector $\hat{\bm{\phi}} = ((\phi_j)_{j\in \mathcal{M}\setminus \mathcal{I}}, (\phi_j)_{j\in \mathcal{I}})^\top$, where $\phi_j = \theta_j$ for $j\in \mathcal{M}\setminus \mathcal{I}$ and the hat $\,\hat{}\,$ is meant to denote the reordering of the elements of $\bm{\phi} = (\phi_1,...,\phi_{\sum_\tau d_\tau})^\top$. This reordering is meant to separate the time-evolving variables $(\phi_j)_{j\in \mathcal{M}\setminus \mathcal{I}}$ from the constants of motion $(\phi_j)_{j\in \mathcal{I}}$.

In matrix form, one has a linear change of coordinates
\begin{align}\label{eq:monomial_change_coord}
    \hat{\bm{\phi}} = V\hat{\bm{\theta}}\,,
\end{align}
where $\hat{\bm{\theta}} = ((\theta_j)_{j\in \mathcal{M}\setminus \mathcal{I}}, (\theta_j)_{j\in \mathcal{I}})^\top$.
The $\sum_{\tau=1}^md_\tau\times \sum_{\tau=1}^md_\tau$ matrix $V$ is defined as
\begin{align}\label{eq:Vmatrix}
    V = \begin{pmatrix} 
        \hat{I} \\
        \hat{V}
    \end{pmatrix}\,,
\end{align}
where $\hat{I} = (I\quad 0)$ is the concatenation of a $(\sum_{\tau=1}^md_\tau - m + 1)\times (\sum_{\tau=1}^md_\tau - m + 1)$ identity matrix $I$ and a zero block matrix of size $(\sum_{\tau=1}^md_\tau - m + 1)\times (m- 1)$, while $\hat{V} = (\hat{\bm{\nu}}_1\cdots\hat{\bm{\nu}}_{m-1})^\top$ contains the vectors $\hat{\bm{\nu}}_\eta = ((\nu_{\eta j})_{j\in \mathcal{M}\setminus \mathcal{I}}, (\nu_{\eta j})_{j\in \mathcal{I}})^\top$ obtained from reordering the first $\sum_{\tau = 1}^m d_\tau$ elements of $\bm{\nu}_\eta = (\nu_{\eta j})_{j\in \mathcal{V}}$ in Eq.~\eqref{eq:nueta}. 

In real form, Eq.~\eqref{eq:kumonomez} becomes
\begin{align}\label{eq:kumonome}
    \dot{\theta}_j = \omega_j + \frac{1}{\mu_{\tau j}} \sum_{k\in\mathcal{M}_{\tau}}S_{jk} \sin(\theta_k - \theta_j - \kappa_{jk})\,,
\end{align}
for all $j\in\mathcal{M}_\tau$ and $\tau\in\{1,...,m\}$. Applying the change of variables in Eq.~\eqref{eq:monomial_change_coord} yields $\dot{\phi}_j = 0$ for $j\in\mathcal{I}$,
\begin{align}
    \dot{\phi}_j = \omega_j + \frac{1}{\mu_{m j}} \sum_{k\in\mathcal{M}_{m}}S_{jk} \sin(\phi_k - \phi_j - \kappa_{jk})\,,
\end{align}
for $j\in\mathcal{M}_m$, and
\begin{align}
    &\dot{\phi}_j = \omega_j + \frac{1}{\mu_{\eta j}} \sum_{k\in\mathcal{M}_{\eta}\setminus \{j_\eta\}}S_{jk} \sin(\phi_k - \phi_j - \kappa_{jk}) \nonumber
    \\&+ \frac{1}{\mu_{\eta j}} S_{jj_\eta } \sin(\textstyle{\sum_{\ell \in \mathcal{M}\setminus\mathcal{I}}}V^{-1}_{j_\eta, \ell}\phi_\ell - \phi_j - \kappa_{jj_\eta} + \kappa_{\eta})\,,
\end{align}
for $j \in \mathcal{M}_{\eta}\setminus\{j_\eta\}$, $\eta\in\{1,...,m-1\}$, where 
\begin{align}\label{eq:kappa_cte_mvt}
    \kappa_{\eta} % = \sum_{\ell \in \mathcal{I}}V^{-1}_{j_\eta, \ell}\phi_\ell(0) 
    = \sum_{\varepsilon = 1}^{m-1}V^{-1}_{j_\eta, j_\varepsilon}\phi_{j_\varepsilon}(0)\,,\quad \phi_{j_\varepsilon}(0) = \bm\nu_\varepsilon^\top \bm{\theta}(0)\,.
\end{align}
%and $V^{-1}_{j_{\eta}, \ell}$ for all $\ell\in\mathcal{M}$ is understood to be the $(\sum_{\tau=1}^md_\tau - m + 1 + \eta)$-th line of $V^{-1}$ (similarly for $V^{-1}_{j_\eta, j_\epsilon}$), following the previously introduced ordering.
Hence, there is a reduction from $\sum_{\tau = 1}^m d_\tau$ equations to $\sum_{\tau = 1}^m d_\tau - m + 1$ equations for the monomial parts.

The monomial constants of motion thus act as phase lags $\kappa_1, ...,\kappa_{m-1}$ in the coordinates $(\phi_j)_{j \in \mathcal{M}_{\eta}\setminus\mathcal{I}}$ for all $\eta\in\{1,...,m-1\}$. Moreover, there is a special type of higher-order interactions that connect the monomial parts originally disconnected from each other. The above procedure is summarized in Fig.~\ref{fig:fig2}(b).

Let us now turn our focus to the partial integration of the conserved cross-ratio parts $\mathcal{C}_1, ..., \mathcal{C}_c$. In contrast to the monomial parts, the coordinate transformation required for partial integration is not linear and more refined mathematical tools are therefore needed to construct it. We thus devote the next subsection to the derivation of this transformation.

\subsection{Operator-theoretic derivation of WS transformation}
\label{subsec:WS_derivation}

Knowing that some cross-ratios are conserved, the simplest way to achieve the integration is to choose the coordinates of three oscillators within each partially integrable part and use the $n_{\gamma} - 3$ independent cross-ratios as the other coordinates, similar to our treatment of the conserved monomials. Yet, since there are generally more conserved quantities in a part $\mathcal{C}_\gamma$ than in a part $\mathcal{M}_\tau$, one can hope to find a coordinate change to mesoscopic observables, i.e., observables depending on all the oscillators within a part $\mathcal{C}_\gamma$. Ideally, one would also like to have coordinates that help characterizing synchronization within the part.

WS theory is known to yield a natural coordinate system to quantify phase synchronization and it retains a clear interpretation in the infinite-size limit $N\to\infty$, owing to its connection with the Ott-Antonsen Ansatz~\cite{Ott2008, Pikovsky2008, Marvel2009, Pikovsky2011}. Also, for $N\to\infty$, traveling wave analysis and time of travel coordinates clarify the origin of WS transformation~\eqref{eq:ws}, as explained in the original papers (Ref.~\cite[p.2392]{Watanabe1993} and Ref.~\cite[Sec. 5.3]{Watanabe1994}). However, the transformation is often used without detailed justification in the literature and explicit derivations of the transformation itself remain scarce. We thus suggest an operator-theoretic derivation, as summarized in Fig.~\ref{fig:fig2}(c).

To begin with, note that the dynamics of any observable $f:\mathbb{T}^N \to \mathbb{C}$ satisfies a linear equation $\dot{f} = \mathcal{K}[f]$. The solution is readily given by $f(\bm{z}(t)) = \exp(t\mathcal{K})f(\bm{z}(0))$, but we would like to have a closed form for $\exp(t\mathcal{K})$ to obtain the desired change of coordinates. For that, we first take advantage of the Koopman generator’s decomposition in Eq.~\eqref{eq:splitted_generator} to define specific observables depending only on the phases of the oscillators in $\mathcal{C}_{\gamma}$. More precisely, consider the observable $g_{\gamma}:\mathbb{T}^N\to \mathbb{C}$ for each $\gamma$ such that $g_{\gamma} = f_{\gamma}\circ r_{\gamma}$ with the projection $r_{\gamma}: (z_j)_{j\in \mathcal{V}} \mapsto (z_j)_{j\in \mathcal{C}_{\gamma}}$ of the state on the subspace spanned by $(z_j)_{j\in \mathcal{C}_{\gamma}}$, and $f_{\gamma}$ sends $(z_j)_{j\in \mathcal{C}_\gamma}$ to a complex number. Then, $\dot{g}_{\gamma} = \mathcal{K}_{\gamma}[g_{\gamma}]$ and $g_{\gamma}(\bm{z}(t)) = \exp(t\mathcal{K}_\gamma)g_{\gamma}(\bm{z}(0))$, but the closed form of $\exp(t\mathcal{K}_{\gamma})$ remains unknown. 

To move forward, we use an insight from WS theory on a class of identical phase dynamics~\cite{Watanabe1993, Watanabe1994}, soon recognized to be related to identical Riccati equations~\cite{Goebel1995}. Strictly speaking, the differential equations for the Kuramoto model are not Riccati equations, but they do have a correspondence. To establish such a correspondence, given a solution curve $\bm{z}$ generated by Eq.~\eqref{eq:kooku} and a function of time $q_{\gamma}$ such that $q_{\gamma}(t) := \rho_{\gamma}(\bm{z}(t))$, define a new time-dependent Koopman generator for each $\gamma$ such that
\begin{align}\label{eq:generator_ric}
    \mathcal{R}_{\gamma}(t) = q_{\gamma}(t) L_{-1}^{\gamma} + i\Omega_\gamma L_0^{\gamma} - \overline{q_{\gamma}(t)}L_1^{\gamma}\,.
\end{align}
Then, $\mathcal{R}_\gamma$ is the generator of a non-autonomous system of identical Riccati equations and generates the same solution as $\mathcal{K}_{\gamma}$ if the initial conditions coincide and $q_{\gamma}(t) = \rho_\gamma(\bm{z}(t))$ (Lemma~\ref{lem:correspondance_nonautonome}).
In other terms, $\mathcal{R}_{\gamma}(t)$ is only aligned with $\mathcal{K}_{\gamma}$ on the solution curve [Fig.~\ref{fig:fig2} (c1)], but this alignment is just enough to pursue partial integration.

\begin{figure*}[t]
    \centering    \includegraphics[width=1\linewidth]{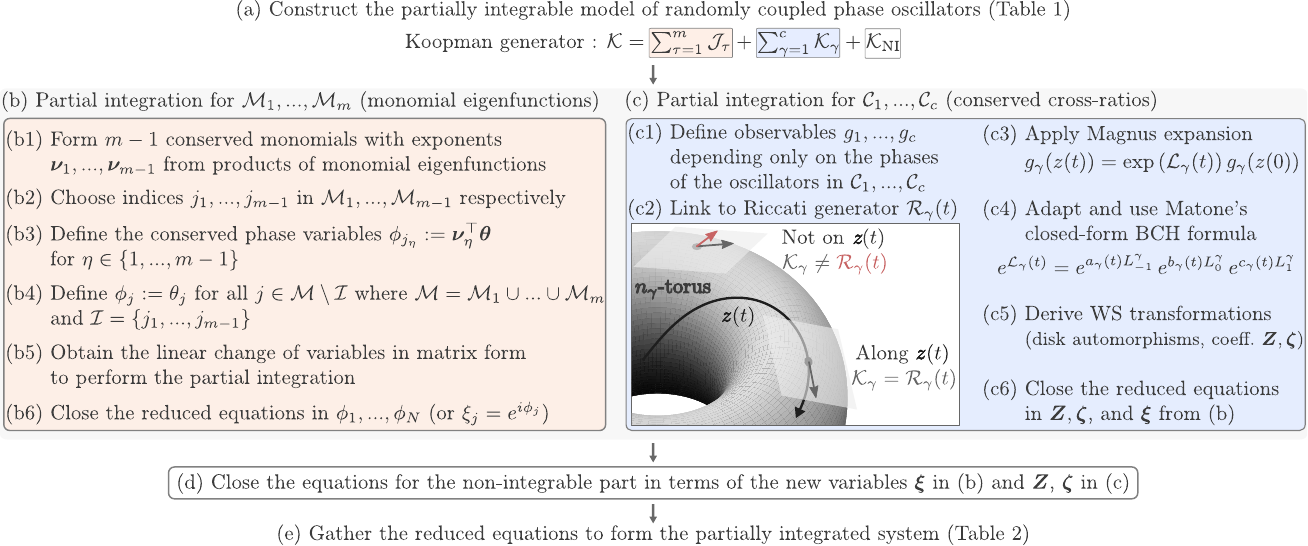}

\vspace{-0.2cm}
    
    \caption{Summary of the procedure to get the partially integrated system from the model admitting monomial eigenfunctions and conserved cross-ratios. (a) We construct the partially integrable model supporting monomial eigenfunctions and conserved cross-ratios, which leads to a separation of the Koopman generator as explained in Sec.~\ref{subsec:partially_integrable_model}. (b) Procedure to partially integrate the parts admitting monomial eigenfunctions. (c) Procedure to partially integrate the parts admitting conserved monomials. In (c2), the link between the Koopman generators $\mathcal{K}_1,...,\mathcal{K}_c$ related to the parts $\mathcal{C}_1,...,\mathcal{C}_c$ and the Koopman generator of Riccati equations $\mathcal{R}_1(t),...,\mathcal{R}_c(t)$ is illustrated. The generator $\mathcal{K}_\gamma$ is only aligned to $\mathcal{R}_\gamma(t)$ along a specific trajectory on the $n_\gamma$-torus. }
    \label{fig:fig2}
\end{figure*}

Under the correspondence $\mathcal{K}_{\gamma} = \mathcal{R}_{\gamma}(t)$, the dynamics of the observable $g_\gamma$ becomes $\dot{g}_\gamma = \mathcal{R}_{\gamma}(t)[g_\gamma]$ for $\gamma \in \{1,...,c\}$, which are non-autonomous, linear, and uncoupled differential equations. Leveraging Magnus expansion~\cite{Magnus1954, *Blanes2009}, we obtain the solution
\begin{align}\label{eq:formal_solution}
    g_{\gamma}(z(t)) = \exp\left(\mathcal{L}_{\gamma}(t)\right)g_{\gamma}(z(0))\,,
\end{align}
where
\begin{equation}
    \mathcal{L}_{\gamma}(t) = B_{\gamma}(t)L_{-1}^{\gamma} + 2iY_{\gamma}(t)L_0^{\gamma} - \overline{B_{\gamma}(t)}L_1^{\gamma}\,,\\
\end{equation}
\begin{equation}
    B_{\gamma}(t) = \sum_{n=1}^\infty b_n^{\gamma}(t)\,,\quad 
    Y_{\gamma}(t) = \sum_{n=1}^\infty y_n^{\gamma}(t)\,,
\end{equation}
while $b_n^{\gamma}(t)$ and $y_n^{\gamma}(t)$ have integral forms and depend on $\Omega_\gamma$ and $q_{\gamma}(t)$ (Sec.~\ref{subsec:partial_integration}). 

The problem in deriving the explicit action of $\exp\left(\mathcal{L}_{\gamma}(t)\right)$ is that $L_{-1}^{\gamma}$, $L_0^{\gamma}$, and $L_1^{\gamma}$ do not commute, i.e., $[L_n^{\gamma}, L_m^{\gamma}] = (n - m)L_{n + m}^{\gamma}$, so the exponential cannot be directly expressed as a product of three exponentials for each operator. Moreover, a closed-form equation for $\exp\left(\mathcal{L}_{\gamma}(t)\right)$ with the BCH formula does not necessarily exist.

Fortunately, in 2015, Matone found the closed-form equation for the exponential of different elements in the Virasoro algebra, including $\mathfrak{sl}_2(\mathbb{C})$ as a subalgebra~\cite{Matone2015}. For our needs, we derived the specific form of Matone's formula for the projective special unitary group $\mathrm{PSU}(1,1)$ (Sec.~\ref{subsec:matone}) :
\begin{align}\label{eq:matone}
    &\exp\left\{\nu(u)\left[VL_{-1} + (U - \bar{U})L_0 -\bar{V}L_1\right]\right\}\\ &= \exp(-(V/\bar{U}) L_{-1})\exp(2\ln \bar{U} L_0)\exp((\bar{V}/\bar{U}) L_1)\,,\nonumber
\end{align}
where $u = (U + \bar{U})/2$, $|U|^2 - |V|^2 = 1$, and
\begin{align*}
    \nu(x) = \frac{1}{2\sqrt{x^2 - 1}}\ln\left(\frac{x + \sqrt{x^2 - 1}}{x - \sqrt{x^2 -1}}\right)\,.
\end{align*}
To use the formula for $\exp\left(\mathcal{L}_{\gamma}(t)\right)$, define
\begin{align*}
    U_{\gamma}(t) := X_{\gamma} + \frac{i Y_{\gamma}(t)}{\nu(X_{\gamma})}\quad\text{and}\quad
    V_{\gamma}(t) := \frac{B_{\gamma}(t)}{\nu(X_{\gamma})}\,,
\end{align*}
where $X_{\gamma}$ is constrained to satisfy $|U_{\gamma}(t)|^2 - |V_{\gamma}(t)|^2 = 1$ for all $\gamma$ and $t$. Despite the apparent complexity of the constraint, we can explicitly solve for $X_{\gamma}$ and obtain the simple relation $X_{\gamma} = \pm \cosh(\sqrt{|B_{\gamma}(t)|^2 - Y_{\gamma}(t)^2})$. One thus gets the desired form
\begin{align*}
    \mathcal{L}_{\gamma} = \nu(X_{\gamma})(V_{\gamma}L_{-1}^{\gamma} + (U_{\gamma} - \overline{U}_{\gamma})L_0^{\gamma} - \overline{V}_{\gamma}L_1^{\gamma})\,.
\end{align*}
Equation~\eqref{eq:matone} allows splitting the exponential and finally provides the action of $\exp\left(\mathcal{L}_{\gamma}(t)\right)$ on the observables:
\begin{align}\label{eq:sol_gen_ric}
    g_{\gamma}(z(t)) = \frac{U_{\gamma}(t)g_{\gamma}(z(0)) + V_{\gamma}(t)}{\overline{V_{\gamma}(t)}g_{\gamma}(z(0)) + \overline{U_{\gamma}(t)}}\,,
\end{align}
that is, a time-dependent disk automorphism in $\mathrm{Aut}(\mathbb{D}) \simeq \mathrm{PSU}(1,1)$. In particular, the choice of observable $g_\gamma(z_1,...,z_N) = z_j$ for $j\in \mathcal{C}_\gamma$ implies that the solutions of the Kuramoto model for $\mathcal{C}_1$, ..., $\mathcal{C}_c$ have the form of disk automorphisms, as expected from WS theory.

There are infinitely many integrals to be solved to get $U_{\gamma}(t)$ and $V_{\gamma}(t)$, but there is an alternative: interpreting them as variables having their own equations of motion. Under this perspective, every solution $(U_{\gamma}(t), V_{\gamma}(t))$ of the yet-to-be-determined differential equations is constrained to start at $(U_{\gamma}(0), V_{\gamma}(0)) = (\pm 1, 0)$ for any initial conditions $(z_j(0))_{j\in \mathcal{C}_\gamma}$ in order for Eq.~\eqref{eq:sol_gen_ric} to be satisfied. Considering that different initial conditions $(\xi_j)_{j\in \mathcal{C}_\gamma}\in \mathbb{T}^{n_{\gamma}}$ of the Kuramoto model lead to different trajectories, it is more informative to have different initial conditions leading to different trajectories for the disk automorphism coefficients. To do so, we express the initial conditions as $\xi_j = (a_{\gamma}w_j + b_{\gamma})/(\overline{b}_{\gamma}w_j + \overline{a}_{\gamma})$, where $|a_{\gamma}|^2 - |b_{\gamma}|^2 = 1$,  $w_j \in \mathbb{T}^1$ for all $j\in \mathcal{C}_\gamma$, and $\gamma\in\{1,...,c\}$. Consequently, 
\begin{align}\label{eq:ws_UV}
    z_j(t) = \frac{u_{\gamma}(t)w_j + v_{\gamma}(t)}{\overline{v_{\gamma}(t)}w_j + \overline{u_{\gamma}(t)}}\,,\quad \forall \,j\in \mathcal{C}_\gamma\,,
\end{align}
where $u_{\gamma}(t) := a_{\gamma}U_{\gamma}(t) + \overline{b}_{\gamma}V_{\gamma}(t)$ and $v_{\gamma}(t) := b_{\gamma}U_{\gamma}(t) + \overline{a}_{\gamma}V_{\gamma}(t)$ satisfy $|u_{\gamma}(t)|^2 - |v_{\gamma}(t)|^2 = 1$. 

Moreover, since $u_{\gamma}(t)$ and $v_{\gamma}(t)$ are not bounded, we make the change of coordinates $u_{\gamma}^2 = \zeta_{\gamma}/(1 - |Z_{\gamma}|^2)$, $v_{\gamma}^2 = \overline{\zeta}_{\gamma}Z_{\gamma}^2/(1 - |Z_{\gamma}|^2)$ where $\zeta_{\gamma} = e^{i\varphi_\gamma}$ and $Z_{\gamma}(t) \in \mathbb{D}$. The change of coordinates applied to Eq.~\eqref{eq:ws_UV} finally leads to 
\begin{align}\label{eq:auto_ric}
    z_j(t) = M_{Z_{\gamma}(t), \zeta_{\gamma}(t)}(w_j) = \frac{\zeta_{\gamma}(t)w_j + Z_{\gamma}(t)}{1 + \zeta_{\gamma}(t)\,\overline{Z_{\gamma}(t)}w_j}\,,
\end{align}
i.e., the complex version of the Watanabe-Strogatz transformation~\eqref{eq:ws}. The transformation that leads to the real, original, form in Eq.~\eqref{eq:ws} is obtained by using trigonometric identities as in Ref.~\cite[IV, A]{Marvel2009}. Our approach thus provides an explicit derivation of the transformation, but also highlights the complex relation between $Z_{\gamma}$, $\zeta_{\gamma}$, and the original coordinates $z_1, ..., z_N$. % Equation~\eqref{eq:auto_ric} is the starting point to partially integrate the parts admitting conserved cross-ratios.

\subsection{Partial integration of cross-ratio parts}
\label{subsec:partial_integration_crossratio}

In the last section, we showed that there are infinitely many integrals to solve to obtain $U_{\gamma}(t)$, $V_{\gamma}(t)$ or equivalently, through a change of variables, $Z_{\gamma}(t)$, $\zeta_{\gamma}(t)$ for all $\gamma \in \{1,...,c\}$. Yet, as mentioned before, we can interpret them as variables having their own equations of motion~: if we find their differential equations in closed form (in terms of $(Z_{\gamma})_{\gamma\in\{1,...,c\}}$, $(\zeta_{\gamma})_{\gamma\in\{1,...,c\}}$, the variables $(\phi_j)_{j\in\mathcal{M}\setminus\mathcal{I}}$ introduced in Sec.~\ref{subsec:partial_integration_monomial}, and  $\bm{z}_{\mathcal{P}} := (z_j)_{j\in \mathcal{P}}$), then the partial integration will be complete.

To perform the partial integration, we adopt the ``algebraic approach'' from Ref.~\cite{Marvel2009} to each part admitting conserved cross-ratios. For the next few steps, we drop the time dependencies to simplify the notation. First, the time derivative of $z_j$ in Eq.~\eqref{eq:auto_ric} for $j \in \mathcal{C}_\gamma$ and for all $\gamma \in\{1,...,c\}$ is
\begin{align}\label{eq:dot_z_wspaper}
    &\dot{z}_j = \nonumber\\&\frac{\dot{Z}_{\gamma} + \dot{Z}_{\gamma}\bar{Z}_{\gamma} - Z_{\gamma}\dot{\bar{Z}}_{\gamma} + i(1-|Z_{\gamma}|^2)\dot{\varphi}_{\gamma}e^{i\varphi_{\gamma}}w_j - \dot{\bar{Z}}_{\gamma}e^{2i\varphi_{\gamma}}w_j^2}{1 + 2 \bar{Z}_{\gamma}e^{i\varphi_{\gamma}}w_j + {\bar{Z}_{\gamma}}^2e^{2i\varphi_{\gamma}}w_j^2\phantom{\sum^{N}}}. 
\end{align}
Second, taking the inverse of Eq.~\eqref{eq:auto_ric}, i.e.,
\begin{align}\label{eq:wj}
    w_j = e^{i\varphi_{\gamma}}\frac{z_j - Z_{\gamma}}{1 - \bar{Z}_{\gamma}z_j}\,, % \qquad\forall t\,\in\mathbb{R},\, j \in \mathcal{C}_\gamma\,,
\end{align}
and substituting into Eq.~\eqref{eq:dot_z_wspaper} yields
\begin{align}
    \dot{z}_j = &\left(\frac{\dot{Z}_{\gamma} - iZ_{\gamma}\,\dot{\varphi}_{\gamma}}{1 - \left|Z_{\gamma}\right|^2} \right) -\left(\frac{\sdot{\bar{Z}}_{\gamma} +i\sbar{Z}_{\gamma}\,\dot{\varphi}_{\gamma}}{1 - \left|Z_{\gamma}\right|^2}\right)z_j^2 \label{eq:} \\&+ i\left(\frac{i(\sbar{Z}_{\gamma}\sdot{Z}_{\gamma}-\sdot{\sbar{Z}}_{\gamma}Z_{\gamma}) + (1 + \left|Z_{\gamma}\right|^2)\dot{\varphi}_{\gamma}}{1 - \left|Z_{\gamma}\right|^2}\right)z_j \nonumber
\end{align}
for all $j\in\{1,...,N\}$. 

Third, for these differential equations to be equivalent to the Riccati equations related to Eq.~\eqref{eq:generator_ric}, the system of differential-algebraic equations must satisfy
\begin{align*}
    q_{\gamma}(t) &= \frac{\sdot{Z}_{\gamma}(t) - iZ_{\gamma}(t)\,\dot{\varphi}_{\gamma}(t)}{1 - \left|Z_{\gamma}(t)\right|^2}\,, \\
    \Omega_{\gamma} &= \frac{i(\sbar{Z}_{\gamma}(t)\sdot{Z}_{\gamma}(t)-\sdot{\sbar{Z}}_{\gamma}(t)Z_{\gamma}(t)) + (1 + \left|Z_{\gamma}(t)\right|^2)\dot{\varphi}_{\gamma}(t)}{1 - \left|Z_{\gamma}(t)\right|^2}
\end{align*}
for all $t$. Solving for $\dot{Z}_{\gamma}$ and $\dot{\varphi}_{\gamma}$ gives
\begin{align*}
    \dot{Z}_{\gamma}(t) &=  q_{\gamma}(t) + i\Omega_{\gamma} Z_{\gamma}(t)- \overline{q_{\gamma}(t)}Z_{\gamma}(t)^2\,,\\
    \dot{\zeta}_{\gamma}(t) &= (i\Omega_{\gamma} + q_{\gamma}(t)\overline{Z_{\gamma}(t)} - \overline{q_{\gamma}(t)}\,Z_{\gamma}(t))\zeta_{\gamma}(t)\,.
\end{align*}

Finally, the relation $q_{\gamma}(t) = \rho_{\gamma}(\bm{z}(t))$ allows going back to the Kuramoto model.
Now, one needs to close $\rho_{\gamma}(\bm{z})$ in terms of $(Z_{\gamma})_{\gamma\in\{1,...,c\}}$, $(\zeta_{\gamma})_{\gamma\in\{1,...,c\}}$, $(\phi_j)_{j\in\mathcal{M}\setminus\mathcal{I}}$, and  $\bm{z}_{\mathcal{P}}$. The partition $P$ splits the sum in Eq.~\eqref{eq:rho} as
\begin{align*}
    \rho_{\gamma}(\bm{z}) = \sum_{k\in\mathcal{M}} \mathcal{A}_{\gamma k}z_k + \sum_{\delta=1}^c \sum_{k\in \mathcal{C}_\delta} \mathcal{A}_{\gamma k}z_k + \sum_{k\in\mathcal{P}} \mathcal{A}_{\gamma k}z_k\,.
\end{align*}
For the first term, by setting $\xi_\ell = e^{i\phi_\ell}$ for $\ell\in\mathcal{M}\setminus \mathcal{I}$, the change of coordinates~\eqref{eq:monomial_change_coord} yields
\begin{align*}
    \sum_{k\in\mathcal{M}} \mathcal{A}_{\gamma k}z_k = \sum_{k\in\mathcal{M}\setminus\mathcal{I}}\mathcal{A}_{\gamma k}\xi_k + \sum_{\varepsilon = 1}^{m-1} \mathcal{A}_{\gamma j_\varepsilon}e^{i\kappa_\varepsilon}\xi^{\bm{v}_\varepsilon}\,,
\end{align*}
where 
\begin{equation}
    \xi^{\bm{v}_\varepsilon} = \prod_{\ell \in \mathcal{M}\setminus \mathcal{I}}\xi_\ell^{V_{j_\varepsilon \ell}^{-1}}
\end{equation}
and $\kappa_\varepsilon$ is given by Eq.~\eqref{eq:kappa_cte_mvt}. Equation~\eqref{eq:auto_ric} closes the second term in terms of $(Z_{\gamma})_{\gamma\in\{1,...,c\}}$, $(\zeta_{\gamma})_{\gamma\in\{1,...,c\}}$ and the third term for the nonintegrable part is already closed.

\begin{table*}[t]
\caption{\\ 
Summary of the partially integrated Kuramoto dynamics on the random graph defined in Table~\ref{tab:random_graph}.}
\begin{ruledtabular}
\label{tab:partially_integrated}
\begin{tabular}{ c }
\parbox[t]{8cm}{\vspace{-1\baselineskip}\begin{align*}
&\textbf{Monomial part}, \eta = \{1,...,m-1\}\\
    &\dot{\xi}_j = i\omega_j\xi_j + \frac{1}{2\mu_{\eta j}} \sum_{k\in\mathcal{M}_{\eta}\setminus \{j_\eta\}}S_{jk} (\xi_ke^{-i\kappa_{jk}} - \bar{\xi}_k\xi_j^2e^{i\kappa_{jk}})\\ & + \frac{1}{2\mu_{\eta j}} S_{jj_\eta } (\xi^{\bm{v}_\eta} e^{i(\kappa_{\eta} - \kappa_{jj_\eta})} - \bar{\xi}^{\bm{v}_\eta}\xi_j^2 e^{-i(\kappa_{\eta} - \kappa_{jj_\eta})} )\,,\,\, j \in \mathcal{M}_{\eta}\setminus\{j_\eta\}\\
    &\dot{\xi}_j = i\omega_j\xi_j + \frac{1}{2\mu_{m j}} \sum_{k\in\mathcal{M}_{m}}S_{jk}(\xi_ke^{-i\kappa_{jk}} - \bar{\xi}_k\xi_j^2e^{i\kappa_{jk}}) \,, \quad j\in\mathcal{M}_m\\
    \\
    &\textbf{Cross-ratio part}, \gamma \in \{1,...,c\}\\
    &\dot{Z}_{\gamma} = F_{\gamma}(\bm{Z}, \bm{\zeta}, \bm{\xi}, \bm{z}_{\mathcal{P}}) + i\Omega_{\gamma} Z_{\gamma} - \overline{F_{\gamma}(\bm{Z}, \bm{\zeta}, \bm{\xi}, \bm{z}_{\mathcal{P}})}Z_{\gamma}^2\,,\\%&&\forall \gamma \in \{1,...,c\}\\
    &\dot{\zeta}_{\gamma} = (i\Omega_{\gamma} + F_{\gamma}(\bm{Z}, \bm{\zeta}, \bm{\xi}, \bm{z}_{\mathcal{P}})\overline{Z}_{\gamma} - \overline{F_{\gamma}(\bm{Z}, \bm{\zeta}, \bm{\xi}, \bm{z}_{\mathcal{P}})}\,Z_{\gamma})\,\zeta_{\gamma}\\
    \\
    &\textbf{Non-integrable part}\\
    &\dot{z}_j = 
    G_{j}(\bm{Z}, \bm{\zeta}, \bm{\xi}, \bm{z}_{\mathcal{P}}) - \overline{G_{j}(\bm{Z}, \bm{\zeta}, \bm{\xi}, \bm{z}_{\mathcal{P}})}z_j^2\,,\qquad j \in  \mathcal{P}
\end{align*}}
\quad
\parbox[t]{8cm}{\vspace{-1\baselineskip}\begin{align*}
&\textbf{Related functions}\\
&\kappa_{\eta}  = \sum_{\varepsilon = 1}^{m-1}V^{-1}_{j_\eta, j_\varepsilon}\phi_{j_\varepsilon}(0)\,,\quad \phi_{j_\varepsilon}(0) = \bm\nu_\varepsilon^\top \bm{\theta}(0)\\
    &\Omega_{\gamma} = \omega_{\ell_{\gamma}} - 2\,\mathrm{Im}(\mathcal{A}_{\gamma \ell_{\gamma}})\\
    &F_{\gamma}(\bm{Z}, \bm{\zeta}, \bm{\xi}, \bm{z}_{\mathcal{P}}) = \sum_{k\in\mathcal{M}\setminus\mathcal{I}}\mathcal{A}_{\gamma k}\xi_k + \sum_{\varepsilon = 1}^{m-1} \mathcal{A}_{\gamma j_\varepsilon} e^{i\kappa_\varepsilon}\xi^{\bm{v}_\varepsilon} \\&\phantom{F_{\gamma}(\bm{Z}, \bm{\zeta}, \bm{\xi}, \bm{z}_{\mathcal{P}}) =}+ \sum_{\delta=1}^c \sum_{k\in \mathcal{C}_\delta} \mathcal{A}_{\gamma k}M_{Z_{\delta}, \zeta_{\delta}}(w_k) + \sum_{k\in \mathcal{P}} \mathcal{A}_{\gamma k}z_k\\
    &G_{j}(\bm{Z}, \bm{\zeta}, \bm{\xi}, \bm{z}_{\mathcal{P}}) = \sum_{k\in\mathcal{M}\setminus\mathcal{I}}A_{j k}\xi_k + \sum_{\varepsilon = 1}^{m-1} A_{j j_\varepsilon}e^{i\kappa_\varepsilon}\xi^{\bm{v}_\varepsilon}\\ &\phantom{G_{j}(\bm{Z}, \bm{\zeta}, \bm{\xi}, \bm{z}_{\mathcal{P}}) = }+ \sum_{\delta =1}^c\sum_{k\in \mathcal{C}_\delta} A_{j k}M_{Z_{\delta}, \zeta_{\delta}}(w_k) + \sum_{k\in \mathcal{P}}A_{j k}z_k 
\end{align*}}

\end{tabular}
\end{ruledtabular}
\end{table*}

Altogether, the closed dynamics for the oscillators in the part $\mathcal{C}_\gamma$ is
 \begin{align*}
    \dot{Z}_{\gamma} &= F_{\gamma}(\bm{Z}, \bm{\zeta}, \bm{\xi}, \bm{z}_{\mathcal{P}}) + i\Omega_{\gamma} Z_{\gamma} - \overline{F_{\gamma}(\bm{Z}, \bm{\zeta}, \bm{\xi}, \bm{z}_{\mathcal{P}})}Z_{\gamma}^2\,, \\%&&\forall \gamma \in \{1,...,c\}\\
    \dot{\zeta}_{\gamma} &= (i\Omega_{\gamma} + F_{\gamma}(\bm{Z}, \bm{\zeta}, \bm{\xi}, \bm{z}_{\mathcal{P}})\overline{Z}_{\gamma} - \overline{F_{\gamma}(\bm{Z}, \bm{\zeta}, \bm{\xi}, \bm{z}_{\mathcal{P}})}\,Z_{\gamma})\,\zeta_{\gamma}\,,% \\
\end{align*}
for all $\gamma\in\{1,...,c\}$, where
\begin{align*}
    F_{\gamma}(\bm{Z}, \bm{\zeta}, \bm{\xi}, \bm{z}_{\mathcal{P}}) &= \sum_{k\in\mathcal{M}\setminus\mathcal{I}}\mathcal{A}_{\gamma k}\xi_k + \sum_{\varepsilon = 1}^{m-1} \mathcal{A}_{\gamma j_\varepsilon}e^{i\kappa_\varepsilon}\xi^{\bm{v}_\varepsilon}\\&+ \sum_{\delta=1}^c \sum_{k\in \mathcal{C}_\delta} \mathcal{A}_{\gamma k}M_{Z_{\delta}, \zeta_{\delta}}(w_k) + \sum_{k\in \mathcal{P}} \mathcal{A}_{\gamma k}z_k\,.
\end{align*}

The closure of the equations related to the oscillators in the non-integrable part $\mathcal{P}$ is similar to the above procedure, which completes the partial integration. The partially integrated system is presented in Table~\ref{tab:partially_integrated}. Two examples (without monomial parts) are provided in SI (sec.~\ref{subsec:basic_cases}). All in all, we started with 
\begin{equation*}
    N = \sum_{\tau=1}^m d_{\tau} + \sum_{\gamma=1}^c n_{\gamma} + p 
\end{equation*}
equations and we obtained a reduced system of dimension
\begin{align*}
    n = \sum_{\tau=1}^{m-1} (d_{\tau} - 1) + d_m + 3c + p\,.
\end{align*} 
Hence, there are
\begin{align*}
    q = N - n = (m-1) + \sum_{\gamma=1}^c (n_{\gamma} - 3)
\end{align*}
functionally independent constants of motion.

\section*{Conclusion}

We introduced a modular random network of Kuramoto oscillators admitting different conserved quantities and demonstrated how to perform the partial integration of the model with operator-theoretic tools. For the latter, we used monomial eigenfunctions of the Koopman generator and we provided a derivation of the Watanabe–Strogatz transformation, based on Magnus expansion and a closed form of the BCH formula introduced by Matone~\cite{Matone2015}.

Although we have applied our framework only to the Kuramoto model, it extends to a broader class of phase dynamics, including the Winfree model~\cite{Winfree1967} and the theta model~\cite{Ermentrout1986}, and we expect it to be useful for other general oscillator dynamics as well~\cite{Lohe2009, Lohe2010, Lohe2018, Tanaka2014, Chandra2019a, Bronski2020, Lipton2021, Gong2019, Cestnik2024}. In particular, considering Riccati-type dynamics with Koopman generator
\begin{align}\label{eq:gen}
     \mathcal{K}_{t,\mathbf{Z}} =  a(t, \mathbf{Z}) \,L_{-1} + b(t, \mathbf{Z})\, L_0 + c(t, \mathbf{Z})\,L_1\,,
\end{align}
where $\mathbf{Z} = (Z_1,...,Z_N)\in\mathbb{C}^N$ and $L_n$ defined with $Z_1,...,Z_N$, one can at least formally apply the procedure suggested in Fig.~\ref{fig:fig2}(c). Indeed, one can still relate the generator to the one of a Riccati equation ($\mathcal{R}_{t,\mathbf{Z}} =  \alpha(t) \,L_{-1} + \beta(t)\, L_0 + \gamma(t)\,L_1$), apply Magnus expansion, and use Matone's formula for $\mathfrak{sl}_2(\mathbb{C})$ instead of $\mathfrak{psu}(1,1)$. 

Yet, further study needs to be done to turn the suggested procedure into a rigorous mathematical derivation by controlling the convergence of the series arising in the Magnus expansion, establishing the formal proof for Matone's formula, and specifying the functional spaces in which $a$, $b$, $c$ in Eq.~\eqref{eq:gen} must belong for the procedure to apply. Since Matone's formula is available for Virasoro algebra~\cite{Matone2015} (see also Ref.~\cite{Matone2016}), it is not excluded that one could find other relevant applications to oscillator dynamics. 

Future work could also investigate the spectrum of the introduced random matrix and its potential connections to conserved quantities and specific synchronization phenomena. The framework may also be generalized to address the partial integration of phase-oscillator networks containing motifs that admit Vandermonde-ratio eigenfunctions and symmetry-generated constants of motion~\cite{Thibeault2026}. In addition, we have not yet carried out a systematic analysis of the persistence of these constants of motion when the underlying random graph is perturbed. More broadly, even when a system is not exactly partially integrable, it may remain approximately partially integrable, allowing for a substantial reduction of the governing ODEs through an appropriate choice of observables.

\section*{Code availability}

The \textit{Python} code for this work is available on Zenodo~\cite{Thibeault2026_koopman_kuramoto}. In particular, the script ``generate\_integrability\_partitioned\_weight\_matrix.py" allows generating realizations of the random graph in Table~\ref{tab:random_graph}.

\section*{Acknowledgments}

We thank Renaud Lambiotte for his constructive comments on the manuscript. This work was supported by the Fonds de recherche du Qu\'ebec -- Nature et technologies (V.T., P.D.) and the Natural Sciences and Engineering Research Council of Canada (A.A., B.C., P.D.).

\bibliography{paper_kuramoto_koopman}

%\putbib[mybib]
%\end{bibunit}

\clearpage
%=~=~=~=~=~=~=~=~=~=~=~=~=~=~=~=~=~=~=~=~=~=~=~=~=~=~=~=~=~=~=~=~=~=~=~=~=~=~=~=
% Supplementary Information
% =~=~=~=~=~=~=~=~=~=~=~=~=~=~=~=~=~=~=~=~=~=~=~=~=~=~=~=~=~=~=~=~=~=~=~=~=~=~=~=

\setcounter{equation}{0}
\setcounter{figure}{0}
\setcounter{section}{0}
\setcounter{page}{1}
\setcounter{theorem}{0}
\setcounter{table}{0}

\renewcommand{\theequation}{S\arabic{equation}}
\renewcommand{\thefigure}{S\arabic{figure}}
\renewcommand{\thetable}{S\Roman{table}}
\renewcommand{\thetheorem}{S\arabic{theorem}}
\renewcommand{\thelemma}{S\arabic{lemma}}
\renewcommand{\theHfigure}{Supplement.\thefigure}  % Imp.: otherwise, might confuse hyperref
\renewcommand{\thesection}{S\Roman{section}}
\thispagestyle{empty}

\onecolumngrid

%\normalsize

\let\addcontentsline\oldaddcontentsline % Restore \addcontentsline

\centerline{\large\textbf{Partially integrable random graph of Kuramoto oscillators}}

\vspace{0.1cm}

\centerline{\textbf{--- Supplementary information ---}}

\tableofcontents

% \begin{align*}
%     \bm{\omega} = (\omega_1,...,\omega_{\sum_\gamma n_\gamma}, w_1, w_1 - 2\Imag(\mathcal{A}_{12} - \mathcal{A}_{\gamma 1}), ..., w_1 - 2\Imag(\mathcal{A}_{1n_1} - \mathcal{A}_{11}) )
% \end{align*} % w_1 - 2\Imag(\mathcal{A}_{12} - \mathcal{A}_{\gamma 1}), ..., w_1 - 2\Imag(\mathcal{A}_{1n_1} - \mathcal{A}_{11})

\section{From the Kuramoto model to Riccati equations}
\label{subsec:descriptions}

In Ref.~\cite{Thibeault2026}, we have shown the following lemma.
\begin{lemma}\label{lem:correspondance_complexe_A}
    With $z_j(t) = e^{i\theta_j(t)}$, the initial value problem of the Kuramoto model is equivalent to 
\begin{align}
    \dot{z}_j(t) &= \sum_{k=1}^NA_{jk}z_k(t) - \left(\sum_{k=1}^N\bar{A}_{jk}\bar{z}_k(t) \right)z_j(t)^2 \label{eq:zkurA}\\
    z_j(0) &= e^{i\vartheta_j}  \in \mathbb{T}\label{eq:z0kurA}\,,
\end{align}
where $A$ is a complex matrix of interactions satisfying
\begin{align}
    A = \frac{1}{2}\left(W \circ e^{-i\alpha} + i\diag(\bm{\omega})\right)\,,
\end{align}
where $e^{-i\alpha} = (e^{-i\alpha_{jk}})_{j,k}$, $\bm{\omega} = (\omega_1,...,\omega_N)$, $\circ$ is the element-wise product and $\diag(W) = \diag(\alpha) = \bm{0}$. There exists a constant $a > 0$ such that the problem~(\ref{eq:zkurA}-\ref{eq:z0kurA}) possesses a unique solution $z_1(t),..., z_N(t)$ on $t\in[-a, a]$.
\end{lemma}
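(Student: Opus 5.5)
The proof is a direct computation followed by a standard existence--uniqueness argument. It has three steps.

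\emph{Step 1: Kuramoto implies the complex system.} Let $\theta$ solve Eq.~\eqref{eq:kuramoto} with $\theta_j(0)=\vartheta_j$, and set $z_j=e^{i\theta_j}$. Then $\dot z_j = i\dot\theta_j z_j$. Write
\begin{align*}
\sin(\theta_k-\theta_j-\alpha_{jk}) = \frac{1}{2i}\left(e^{-i\alpha_{jk}}z_k\bar z_j - e^{i\alpha_{jk}}\bar z_k z_j\right),
\end{align*}
and use $|z_j|=1$, so that $\bar z_j z_j = 1$. This gives
\begin{align*}
\dot z_j = i\omega_j z_j + \tfrac12\sum_k W_{jk}\left(e^{-i\alpha_{jk}} z_k - e^{i\alpha_{jk}}\bar z_k z_j^2\right).
\end{align*}
Now compare with the right-hand side of Eq.~\eqref{eq:zkurA}. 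Because $\diag(W)=\diag(\alpha)=\bm 0$, the only diagonal contribution is $A_{jj}=i\omega_j/2$. Hence $A_{jj}z_j - \bar A_{jj}\bar z_j z_j^2 = \tfrac{i\omega_j}{2}z_j + \tfrac{i\omega_j}{2} z_j = i\omega_j z_j$, again using $\bar z_j z_j=1$. The off-diagonal terms match term by term, since $A_{jk}=\tfrac12 W_{jk}e^{-i\alpha_{jk}}$ and $W$ is real. So $z$ solves Eqs.~\eqref{eq:zkurA}--\eqref{eq:z0kurA}.

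\emph{Step 2: the complex system implies Kuramoto.} The vector field in Eq.~\eqref{eq:zkurA} is polynomial in $(z,\bar z)$. Viewed as a real vector field on $\mathbb{R}^{2N}$ it is therefore locally Lipschitz, and Picard--Lindel\"of gives $a>0$ and a unique solution on $[-a,a]$. This is the existence and uniqueness claim of the lemma.

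The main obstacle is to show that this solution stays on the torus, since only then can we write $z_j=e^{i\theta_j}$. Write $p_j=\sum_k A_{jk}z_k$ and compute
\begin{align*}
\frac{d}{dt}|z_j|^2 = 2\,\mathrm{Re}(\bar z_j\dot z_j) = 2\,\mathrm{Re}\left(\bar z_j p_j - \bar p_j z_j|z_j|^2\right) = 2(1-|z_j|^2)\,\mathrm{Re}(\bar z_j p_j).
\end{align*}
Thus $r_j=|z_j|^2-1$ satisfies the linear equation $\dot r_j = -2\,\mathrm{Re}(\bar z_j p_j)\,r_j$, whose coefficient is continuous along the solution. Since $r_j(0)=0$, uniqueness for this scalar linear ODE gives $r_j\equiv0$ on $[-a,a]$.

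\emph{Step 3: lifting to phases.} The curve $z_j(t)$ is continuous and lies in $\mathbb{T}$, so it has a unique continuous lift $\theta_j(t)$ with $\theta_j(0)=\vartheta_j$. This lift is $C^1$, with $\dot\theta_j = -i\bar z_j\dot z_j$. Reversing the algebra of Step 1 shows that $\dot\theta_j$ is real and equals the right-hand side of Eq.~\eqref{eq:kuramoto}.

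Combining Steps 1--3: if $\theta$ solves the Kuramoto problem, then $z=e^{i\theta}$ solves Eqs.~\eqref{eq:zkurA}--\eqref{eq:z0kurA}, and uniqueness identifies it with the Picard solution; conversely, the Picard solution lifts to a Kuramoto solution. The two initial value problems are therefore equivalent on $[-a,a]$.
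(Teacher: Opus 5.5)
Your proof is correct. The paper does not prove this lemma itself; it quotes it from the companion work \cite{Thibeault2026}, so there is no in-paper proof to compare against. The closest in-paper argument is the proof of Lemma~\ref{lem:correspondance_nonautonome}, which follows the same pattern as yours: substitute, then invoke the standard existence--uniqueness theorem. That proof, however, simply \emph{assumes} in its hypotheses that the solution stays on $\mathbb{T}$. Your Step~2 supplies this missing ingredient through the identity $\frac{d}{dt}(|z_j|^2-1)=-2\,\mathrm{Re}(\bar z_j p_j)\,(|z_j|^2-1)$, which shows that the torus is invariant. This is what makes the claimed equivalence genuinely two-sided: off $\mathbb{T}$, the diagonal term $A_{jj}z_j-\bar A_{jj}\bar z_j z_j^2=\tfrac{i\omega_j}{2}z_j(1+|z_j|^2)$ no longer equals $i\omega_j z_j$, and the complex system would not encode phase dynamics. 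The same computation applies verbatim to the Riccati equations of Lemma~\ref{lem:correspondance_nonautonome}. It also shows that the solution stays bounded, so it extends to all $t\in\mathbb{R}$ and $a$ can be taken arbitrary, which is slightly stronger than the local statement in the lemma.
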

To perform the partial integration of the Kuramoto model, it is useful to establish a correspondence with one solution of the model (thus, for a given initial condition) and one solution of a set of Riccati equations representing a non-autonomous dynamical system.
\begin{lemma}\label{lem:correspondance_nonautonome}
    Let the initial value problem of Riccati be
    \begin{align}
        \dot{u}_j(t) &= q_j(t) - \overline{q_j(t)}u_j(t)^2\,,\qquad j\in\{1,...,N\}\label{eq:riccati}\\
        u_j(0) &= d_j \in \mathbb{T} \label{eq:uCI}
    \end{align} % $q_{-1}(t) = \overline{q_1(t)}$
    such that $u_j(t)\in\mathbb{T}$ for all $t \in [-a, a]$ with $a > 0$ and the arbitrary complex function $q_j$ defines the problem. Moreover, consider some solution of the Kuramoto model [Lemma~\ref{lem:correspondance_complexe_A}] $z_1(t), ..., z_N(t)$ for $t \in [-a, a]$ with $z_1(0) = \exp(i\vartheta_1), ..., z_N(0) = \exp(i\vartheta_N)$ and a complex matrix of interactions $A$. If the Riccati problem in Eqs.~(\ref{eq:riccati}-\ref{eq:uCI}) satisfies $d_j = \exp(i\vartheta_j)$ and is defined with
    \begin{align}\label{eq:pj_to_qj}
        q_j(t) = \textstyle{\sum_{k=1}^N} A_{jk}z_k(t)\,, \qquad j\in\{1,...,N\}\,,%\qquad\text{and}\qquad q_{-1}(t) = \overline{q_1(t)}\,.
    \end{align}
    then its unique solution coincides with the one for the Kuramoto model, i.e.,
    \begin{align*}
        u_j(t) = z_j(t)\,,\qquad j\in\{1,...,N\}\,,
    \end{align*}
    on $t \in [-a, a]$. 
\end{lemma}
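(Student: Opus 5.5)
The plan is a direct uniqueness argument for a non-autonomous ODE. Once the Kuramoto solution $z_1(t),\dots,z_N(t)$ is fixed on $[-a,a]$ (Lemma~\ref{lem:correspondance_complexe_A}), the functions $q_j(t)=\sum_k A_{jk}z_k(t)$ in Eq.~\eqref{eq:pj_to_qj} are known functions of time only. They are continuous, and in fact $C^1$, since each $z_k$ solves a polynomial ODE. The Riccati problem \eqref{eq:riccati}--\eqref{eq:uCI} then decouples into $N$ scalar non-autonomous equations $\dot u_j = F_j(t,u_j)$, with $F_j(t,u)=q_j(t)-\overline{q_j(t)}\,u^2$.

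\textbf{Step 1: $z$ solves the Riccati problem.} Substitute $u_j=z_j$ into \eqref{eq:riccati}. The right-hand side becomes $\sum_k A_{jk}z_k(t)-\big(\sum_k\bar A_{jk}\bar z_k(t)\big)z_j(t)^2$. By Eq.~\eqref{eq:zkurA} this is exactly $\dot z_j(t)$. The initial data agree because $d_j=e^{i\vartheta_j}=z_j(0)$. Also $z_j(t)\in\mathbb{T}$ for all $t\in[-a,a]$, since $z_j=e^{i\theta_j}$ with $\theta_j$ real. Equivalently, one can note that $\frac{d}{dt}|z_j|^2=2\,\mathrm{Re}(\bar z_j\dot z_j)=2\,\mathrm{Re}\big(\bar z_j q_j-\bar q_j z_j|z_j|^2\big)$, which vanishes when $|z_j|=1$. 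Hence $(z_j)_j$ is a solution of the Riccati initial value problem satisfying the stated unit-circle constraint.

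\textbf{Step 2: uniqueness.} Each $F_j$ is continuous in $t$ and a polynomial in $u$. It is therefore locally Lipschitz in $u$, uniformly for $t$ in the compact interval $[-a,a]$. On a neighborhood of the closed unit disk, say $|u|\le 2$, the Lipschitz constant is bounded by $4\sup_{[-a,a]}|q_j|$. By Picard--Lindelöf, any two solutions starting from $d_j$ coincide on their common interval of existence. To apply this on the whole of $[-a,a]$, use a Grönwall estimate: let $e_j=u_j-z_j$. Both $u_j$ and $z_j$ stay in $\mathbb{T}$, either by hypothesis or by Step 1, so
\begin{equation*}
|\dot e_j|=|\bar q_j|\,|u_j+z_j|\,|e_j|\le 2\sup_{[-a,a]}|q_j|\,|e_j|,\qquad e_j(0)=0 .
\end{equation*}
Grönwall's inequality then gives $e_j\equiv0$ on $[-a,a]$, forward and backward in time. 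This proves $u_j(t)=z_j(t)$.

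\textbf{Main obstacle.} The only delicate point is conceptual. Because $q_j$ is built from the Kuramoto solution itself, one must treat $z$ as fixed before posing the Riccati problem; otherwise the argument would be circular. Once that is done, the regularity of $q_j$ and the a priori confinement to $\mathbb{T}$ are what make the Lipschitz and Grönwall bound uniform on $[-a,a]$. The rest is routine substitution.
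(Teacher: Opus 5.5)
Your proposal is correct and follows the paper's proof. Like the paper, you check that the Kuramoto solution satisfies the Riccati problem with $q_j=\sum_k A_{jk}z_k$ and $d_j=z_j(0)$, then invoke uniqueness for a non-autonomous ODE whose right-hand side is continuous in $t$ and smooth in $u_j$. Your Gr\"onwall estimate on $e_j=u_j-z_j$ simply makes explicit, on all of $[-a,a]$, the uniqueness that the paper gets by citing the fundamental existence--uniqueness theorem.
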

\begin{proof}
    Substituting $d_j$, $q_j$ and $\bar{q}_j$ in Eqs.~(\ref{eq:riccati}-\ref{eq:uCI}) gives
    \begin{align}
        \dot{u}_j(t) &= \sum_{k=1}^N A_{jk}z_k(t)  - \sum_{k=1}^N \bar{A}_{jk}\bar{z}_k(t)u_j(t)^2\,, \qquad j\in\{1,...,N\}\label{eq:u1ric}\\
        u_j(0) &= \exp(i\vartheta_j) = z_j(0)\label{eq:u2ric}\,.
    \end{align} 
    Clearly, $u_j(t) = z_j(t)$ for all $j$ and $t$, because $(u_j(t))_j$ is a solution of Eqs.~(\ref{eq:u1ric}-\ref{eq:u2ric}), which are equivalent to Eqs.~(\ref{eq:zkurA}-\ref{eq:z0kurA}) [Lemma~\ref{lem:correspondance_complexe_A}]. Since $f_j(x_j, t) = q_j(t) - \overline{q_j(t)}x_j^2$ for each $j\in\{1, ..., N\}$ is continuous in $t$ and continuously differentiable in $x_j$ for all $x_j\in\mathbb{T}$, then the fundamental existence-uniqueness theorem for non-autonomous systems~\cite[p.77]{Perko2001} guarantees the uniqueness of the solution (recall from Lemma~\ref{lem:correspondance_complexe_A} that $z_1(t)$,...,$z_N(t)$ is also unique).
\end{proof}
\begin{remark}
    One must notice that if either the condition $d_j = \exp(i\vartheta_j)$ or $q_j(t) = \textstyle{\sum_{k=1}^N} A_{jk}z_k(t)$ is not satisfied, then no correspondence exists between the Riccati problem and the Kuramoto model. In perhaps more intuitive terms, different trajectories of the Kuramoto dynamics are related to different Riccati equations (i.e., $q_1(t)$,...,$q_N(t)$ are different for each Kuramoto solution) under the condition that the initial conditions coincide.
\end{remark}
% \begin{lemma}
%     If $d_j \neq \exp{i\vartheta_j}$ for all $j$ while the Riccati problem is still defined with Eq.~\eqref{eq:pj_to_qj}, the unique solution $u_j(t)$ will 
% \end{lemma}

\section{Matone's formula for the projective special unitary group}
\label{subsec:matone}
In 2015, based on recent developments on the Baker-Campbell-Hausdorff equation, Matone found the closed-form equation~\cite[(3.12) with (3.9)]{Matone2015} for the exponential of an element of the Virasoro algebra. Notably, his fundamental result includes the closed-form equation for the exponential of an element of $\mathfrak{sl}_2(\mathbb{C})$, that is, an element of the special linear group $\SL_2(\mathbb{C})$. In the paper, we need a particular case of his result for the automorphisms of the disk, which we present in this subsection. For an arbitrary element of $\SL_2(\mathbb{C})$
\begin{align*}
    \gamma = \begin{pmatrix}\mathcal{A}&\mathcal{B}\\\mathcal{C}&\mathcal{D}\end{pmatrix}\,,\quad \mathcal{A}\mathcal{D}- \mathcal{B}\mathcal{C} = 1\,,
\end{align*}
where $\nu_\pm = s \pm \sqrt{s^2 - 1}$ with $s = \frac{1}{2}\tr\gamma$, Matone's formula is
\begin{align*}
    \exp(a L_{-1})\exp(b L_0)\exp(c L_1) =\exp\left\{\frac{\ln(\nu_+/\nu_{-})}{\nu_+ - \nu_{-}}\left[\mathcal{B}L_{-1} + (\mathcal{A} - \mathcal{D})L_0 -\mathcal{C}L_1\right]\right\}\,,
\end{align*}
where $\mathcal{A} = e^{b/2}(e^{-b} - ac)$, $\mathcal{B} = -ae^{b/2}$, $\mathcal{C} = ce^{b/2}$, $\mathcal{D} = e^{b/2}$ and their inverse is, $a = -\mathcal{B}/\mathcal{D}$, $b = 2\ln \mathcal{D}$, $c = \mathcal{C}/\mathcal{D}$. In that last equation, one also understands $L_{-1}, L_0, L_1$ in their matrix form (abusing notation)
\begin{align*}
    L_{-1} = \begin{pmatrix}
        0 & -1 \\ 0 & 0
    \end{pmatrix}\,, \quad L_0 = \begin{pmatrix}
        -1/2 & 0 \\ 0 & 1/2
    \end{pmatrix}\,, \quad L_1 = \begin{pmatrix}
        0 & 0 \\
        1 & 0
    \end{pmatrix}
    \,.
\end{align*}
% \begin{align*}
%     a = \frac{-\mathcal{B}}{\mathcal{D}}\,,\quad b = 2\ln \mathcal{D}\,,\quad c = \frac{\mathcal{C}}{\mathcal{D}}\,.
% \end{align*}

The automorphism group of the disk is isomorphic to a subgroup of $\PSL_2(\mathbb{C})$. Indeed, $\Aut(\mathbb{D})$ is isomorphic to the projective special unitary group
\begin{align*}
    \PSU(1, 1) = \SU(1, 1)/\{\pm I\} = \left\{ M\{\pm I\}\,|\,M \in \SU(1,1) \,\right\}\,,
\end{align*}
where $\SU(1, 1)$ is the special unitary group, containing $2 \times 2$ complex matrices $M$ with unit determinant satisfying (rather than unitary matrices such that $MM^{\dagger} = I$)
\begin{align}\label{eq:special_unitaire}
    MJM^\dagger = J\,,
\end{align}
where $\dagger$ stands for Hermitian conjugation and 
\begin{align*}
    J = \begin{pmatrix}
        1&0\\0&-1
    \end{pmatrix}\,.
\end{align*}
A disk automorphism can be written as \footnote{Note that this parametrization for the disk automorphism radically simplifies the use of Matone's formula in our experience.}
\begin{align}\label{eq:automorphisme_disque_UV_tz}
    f_{U, V}(z) = \frac{Uz + V}{\overline{V}z + \overline{U}}\,,\quad |U|^2 - |V|^2 = 1\,,
\end{align} 
which is related to a matrix (equivalent to its negative counterpart) of $\PSU(1, 1)$
\begin{align*}
    \begin{pmatrix}U&V\\\bar{V}&\bar{U}\end{pmatrix}
    \,.
\end{align*}
From a hyperbolic geometry perspective, these transformations are the orientation-preserving isometries of the Poincaré disk model. 
% Note personel: orientation-preserving par rapport à l'espace hyperbolique et isométrie au sein du modèle
The matrix group acts on the homogenous coordinates $[z_1:z_2]$ of the complex projective line such that%~\footnote{Les points formant la ligne projective complexe sont définis comme les classes d'équivalence de vecteurs non nulle dans $\mathbb{C}^2$, soit tels que deux vecteurs non nuls $(z_1, z_2)$ et $(z_3, z_4)$ où $z_1,z_2,z_3,z_4 \in\mathbb{C}$ sont équivalents si et seulement si $(z_1, z_2) = (c z_3, c z_4)$ pour un certain $c \in \mathbb{C}\setminus\{0\}$. La classe d'équivalence s'écrit $[z_1:z_2]$, ce qui définit les coordonnées homogènes (ou projectives).} telle que
\begin{align*}
   \begin{pmatrix}U&V\\\bar{V}&\bar{U}\end{pmatrix}\begin{pmatrix}
        z_1\\z_2
    \end{pmatrix} = \begin{pmatrix}
        U z_1 + V z_2\\ \bar{V}z_1 + \bar{U}z_2
    \end{pmatrix}\,.
\end{align*}
Since the homogeneous coordinates $[z_1 :z_2]$ are equivalent to a point $z = z_1/z_2$, then
\begin{align*}    
        [U z_1 + V z_2:\bar{V}z_1 + \bar{U}z_2] \sim \frac{U z_1 + V z_2}{\bar{V}z_1 + \bar{U}z_2} = \frac{U \frac{z_1}{z_2} + V }{\bar{V}\frac{z_1}{z_2} + \bar{U}} = \frac{Uz + V}{\overline{V}z + \overline{U}} = f_{U, V}(z)\,
\end{align*}
as expected. That being said, Matone's formula for $\PSU(1,1)$ is
\begin{align}\label{eq:matoneSI}
    \exp(a L_{-1})\exp(b L_0)\exp(c L_1) =\exp\left\{\nu\left[VL_{-1} + (U - \bar{U})L_0 -\bar{V}L_1\right]\right\}\quad\text{with}\quad \nu = \frac{\ln(\nu_+/\nu_{-})}{\nu_+ - \nu_{-}}\,,
\end{align}
where $\nu_{\pm} = s\pm\sqrt{s^2 - 1}$, $s = \frac{1}{2}(U + \bar{U})$ and $a =-V/\bar{U}$, $b = 2\ln \bar{U}$, $c = \bar{V}/\bar{U}$.
\begin{remark}
    Note that Eq.~(4.7) of Ref.~\cite{Matone2015} and the above equation differ by a sign, which seems to be a minor error in the original reference. %ce qui semble être une coquille dans la référence. Somme toutes, ce signe n'aura pas d'incidence sur les équations déterminantes à la fin, puisque ce sera un ratio $\dot{\nu}/\nu$ qui apparaîtra dans celles-ci.  
\end{remark}

\section{Details on the partial integration of the cross-ratio parts}
\label{subsec:partial_integration}
Below, we provide some additional details about the construction of the partially integrable Kuramoto dynamics on a graph and, notably, the operator-theoretic derivation of the Watanabe-Strogatz transformation using Koopman's perspective~\cite{Koopman1931,Koopman1932}, Magnus expansion~\cite{Magnus1954, Blanes2009}, Matone's formula~\cite{Matone2015} and Watanabe-Strogatz theory itself~\cite{Watanabe1993, Watanabe1994, Pikovsky2008, Marvel2009, Lohe2019}. 

% \vspace{0.3cm}
% \noindent- \textit{Useful partitions}
% \vspace{0.3cm}

First, by Lemma~\ref{lem:correspondance_complexe_A}, the Kuramoto dynamics can be formulated as 
\begin{align*}
    \dot{z}_j = \sum_{k=1}^N A_{jk}z_k - \left(\sum_{k=1}^N \bar{A}_{jk}\bar{z}_k\right)z_j^2 \quad\text{with}\quad A = \frac{1}{2}\left(W \circ e^{-i\alpha} + i\diag(\bm{\omega})\right)\,
\end{align*}
and its Koopman generator is thus
\begin{align*}
    \mathcal{K} = \sum_{j=1}^N\sum_{k=1}^N \left(A_{jk}z_k - \bar{A}_{jk}\bar{z}_kz_j^2\right)\pd[]{}{z_j} = \bm{p}^\top \bm L_{-1} - \bar{\bm{p}}^\top \bm L_1\,.
\end{align*}
% Below, we shall shorten appellations like ``vertices that are part of a motif whose related positions/states $z_j$ are involved in a cross-ratio being a constant of motion'' by ``vertices that are part of a conserved motif''. 
Consider the partition $P = \{\mathcal{M}_1, ..., \mathcal{M}_m, \mathcal{C}_1, ..., \mathcal{C}_c, \mathcal{P}\}$ for the $N$ vertices of a graph $\mathcal{G} = (\mathcal{V}, \mathcal{E})$ with $\mathcal{V} = \{1,...,N\}$ and a (complex) weight matrix $A$, where $\mathcal{P}$ is a set of $p$ vertices that are not part of a conserved motif while the parts $\mathcal{C}_1, ..., \mathcal{C}_c$ admit conserved cross-ratios and the parts $\mathcal{M}_1, ..., \mathcal{M}_m$ admit monomial eigenfunctions. The number of vertices in $\mathcal{C}_\gamma$ is $n_{\gamma} = \# \mathcal{C}_\gamma \geq 4$ for all $\gamma\in\{1,...,c\}$. In such a way, the system is divided into a non-integrable part $\mathcal{P}$ and a partially integrable part $\mathcal{P}_\mathrm{I} = \mathcal{M}_1\cup...\cup\mathcal{M}_m\cup\mathcal{C}_1\cup ...\cup \mathcal{C}_c$. Let us define $\mathcal{M} = \mathcal{M}_1\cup...\cup\mathcal{M}_m$ and $\mathcal{C} = \mathcal{C}_1\cup...\cup\mathcal{C}_c$. We will denote $s$ the surjection that maps a vertex with index $j\in\mathcal{C}$ to the index $\gamma = s(j)$ of its related part $\mathcal{C}_\gamma$. From the latter partition, let's define the coarser partitions of the set $\mathcal{V}$, $\mathcal{Q}_\gamma = \{\mathcal{C}_\gamma,\, Q_{\gamma}\}$ with $Q_{\gamma} = \mathcal{V}\setminus\mathcal{C}_\gamma$ and $P' = \{\mathcal{M}, \mathcal{C}, \mathcal{P}\}$.

Each partially integrable part $\mathcal{C}_1, ..., \mathcal{C}_c$ satisfies the three conditions~\ref{itm:2.1}-\ref{itm:2.3}. First, using $P'$, we split the Koopman generator such as
\begin{align}\label{eq:splitted_generatorSI}
    \mathcal{K} = \sum_{\tau=1}^m \mathcal{J}_\tau + \sum_{\gamma=1}^c\mathcal{K}_\gamma + \mathcal{K}_{\mathrm{NI}}
\end{align}
where 
\begin{align*}
    \mathcal{J}_\tau &= \sum_{\tau = 1}^m\sum_{j\in\mathcal{M}_\tau}\Big(i\omega_jz_j + \frac{1}{2\mu_{\tau j}} \sum_{k\in\mathcal{M}_{\tau}}S_{jk} (z_ke^{-i\kappa_{jk}} - \bar{z}_kz_j^2e^{i\kappa_{jk}})\Big)\pd[]{}{z_j}\\
    \mathcal{K}_\gamma &= \sum_{j\in \mathcal{C}_\gamma}\sum_{k\in\mathcal{V}}
    \left(A_{jk}z_k - \bar{A}_{jk}\bar{z}_kz_j^2\right)\pd[]{}{z_j}\\
    \mathcal{K}_{\mathrm{NI}} &= \sum_{j\in \mathcal{P}}\sum_{k\in\mathcal{V}} \left(A_{jk}z_k - \bar{A}_{jk}\bar{z}_kz_j^2\right)\pd[]{}{z_j}\,.
\end{align*}
% Then, $\mathcal{P}_\mathrm{I}$ can be split into its $c$ different partially integrable parts, giving
% \begin{align*}
%     \mathcal{K} = \sum_{\gamma=1}^c\sum_{j\in \mathcal{C}_\gamma}\sum_{k\in\mathcal{V}}
%     \left(A_{jk}z_k - \bar{A}_{jk}\bar{z}_kz_j^2\right)\pd[]{}{z_j} + \mathcal{K}_{\mathrm{NI}}\,.
% \end{align*}

\vspace{0.3cm}
\noindent- \textit{Explicit application of Theorem~2 from Ref.~\cite{Thibeault2026}}
\vspace{0.3cm}

We can work only with $\mathcal{K}_\gamma$ for now. Using the partition $\mathcal{Q}_{\gamma}$ for the sum over $k$ yields
\begin{align*}
    \mathcal{K}_\gamma = \sum_{j\in \mathcal{C}_\gamma}\left(\sum_{k\in \mathcal{C}_\gamma}
    \left(A_{jk}z_k - \bar{A}_{jk}\bar{z}_kz_j^2\right) + \sum_{k\in Q_{\gamma}}
    \left(A_{jk}z_k - \bar{A}_{jk}\bar{z}_kz_j^2\right)\right)\pd[]{}{z_j}\,.
\end{align*}
By condition~\ref{itm:2.1}, $A_{jk} = A_{s(j)k} =: \mathcal{A}_{\gamma k}$ for all $j\in \mathcal{C}_\gamma$ and $k \in Q_{\gamma}$, where only the elements in the $\gamma$-th row and all columns $k \in Q_{\gamma}$ of the $m\times N$ matrix $\mathcal{A}$ are filled for all $\gamma$, leaving the rest of the elements undefined for now. This leads to
\begin{align*}
    \mathcal{K}_\gamma = \sum_{j\in \mathcal{C}_\gamma}\bigg(\sum_{k\in \mathcal{C}_\gamma}
    \left(A_{jk}z_k - \bar{A}_{jk}\bar{z}_kz_j^2\right) + \sum_{k\in Q_{\gamma}}
    \left(\mathcal{A}_{\gamma k}z_k - \bar{\mathcal{A}}_{\gamma k}\bar{z}_kz_j^2\right)\bigg)\pd[]{}{z_j}\,.
\end{align*}
Similarly, condition~\ref{itm:2.2} gives $A_{jk} =: \mathcal{A}_{\gamma k}$ for all $k \in \mathcal{C}_\gamma\setminus\{j\}$, which fills the elements in the $\gamma$-th row and column $k \in \mathcal{C}_\gamma\setminus\{j\}$ of $\mathcal{A}$ for all $\gamma$. The generator becomes
\begin{align*}
    \mathcal{K}_\gamma = \sum_{j\in \mathcal{C}_\gamma}\bigg((A_{jj} - \bar{A}_{jj})z_j + \sum_{k\in \mathcal{C}_\gamma\setminus\{j\}}\left( \mathcal{A}_{\gamma k}z_k - \bar{\mathcal{A}}_{\gamma k}\bar{z}_kz_j^2\right) + \sum_{k\in Q_{\gamma}}
    \left(\mathcal{A}_{\gamma k}z_k - \bar{\mathcal{A}}_{\gamma k}\bar{z}_kz_j^2\right)\bigg)\pd[]{}{z_j}\,,
\end{align*}
and it can be simplified to
\begin{align*}
    \mathcal{K}_\gamma = \sum_{j\in \mathcal{C}_\gamma}\bigg(i\omega_jz_j + \sum_{k\in \mathcal{V}\setminus\{j\}}\left( \mathcal{A}_{\gamma k}z_k - \bar{\mathcal{A}}_{\gamma k}\bar{z}_kz_j^2\right) \bigg)\pd[]{}{z_j}\,.
\end{align*}
Finally, condition~\ref{itm:2.3} is equivalent to 
\begin{align*}
    \omega_j = \omega_{\ell_{\gamma}} + 2\Imag(\mathcal{A}_{\gamma j} - \mathcal{A}_{\gamma \ell_{\gamma}})\,,
\end{align*}
where $\ell_{\gamma}\in \mathcal{C}_\gamma$ labels an arbitrary ``reference oscillator'' within $\mathcal{C}_\gamma$. Consequently, 
\begin{align*}
    A_{jj} = \frac{i\omega_j}{2} = \frac{i}{2}(\omega_{\ell_{\gamma}} + 2\Imag(\mathcal{A}_{\gamma j} - \mathcal{A}_{\gamma \ell_{\gamma}}))%\,,\quad j\in \mathcal{C}_\gamma
\end{align*}
for some $\mathcal{A}_{\gamma j} \in\mathbb{C}$ (but only its imaginary part contributes), thus completely fixing $\mathcal{A}$. Applying the condition for any $j \in \mathcal{C}_\gamma$ leads to
\begin{align*}
    \mathcal{K}_\gamma = \sum_{j\in \mathcal{C}_\gamma}\bigg(i (\omega_{\ell_{\gamma}} + 2\Imag(\mathcal{A}_{\gamma j} - \mathcal{A}_{\gamma \ell_{\gamma}}))z_j + \sum_{k\in \mathcal{V}\setminus\{j\}}\left( \mathcal{A}_{\gamma k}z_k - \bar{\mathcal{A}}_{\gamma k}\bar{z}_kz_j^2\right) \bigg)\pd[]{}{z_j}\,.
\end{align*}
Yet, $2i \Imag(\mathcal{A}_{\gamma j})z_j = \mathcal{A}_{\gamma j}z_j - \bar{\mathcal{A}}_{\gamma j}z_j = \mathcal{A}_{\gamma j}z_j - \bar{\mathcal{A}}_{\gamma j}\bar{z}_jz_j^2$ and therefore
\begin{align*}
    \mathcal{K}_\gamma = \sum_{j\in \mathcal{C}_\gamma}\bigg(i(\omega_{\ell_{\gamma}} - 2\Imag(\mathcal{A}_{\gamma \ell_{\gamma}}))z_j + \sum_{k\in \mathcal{V}}\left( \mathcal{A}_{\gamma k}z_k - \bar{\mathcal{A}}_{\gamma k}\bar{z}_kz_j^2\right) \bigg)\pd[]{}{z_j}\,.
\end{align*}
This finally leads to the more elegant form
\begin{align}\label{eq:kur_PmuSI}
     \mathcal{K}_{\gamma} &=  \rho_\gamma(z) L_{-1}^{\gamma} + i \Omega_{\gamma} L_0^{\gamma} - \overline{\rho_\gamma(z)} L_1^{\gamma}
\end{align}
with 
\begin{align*}
    \Omega_{\gamma} =  \omega_{\ell_{\gamma}} - 2\Imag(\mathcal{A}_{\gamma \ell_{\gamma}})\,,\qquad \rho_{\gamma}(z) = \sum_{k=1}^N \mathcal{A}_{\gamma k}z_k\,,\qquad L_n^{\gamma} = \sum_{j\in \mathcal{C}_\gamma} z_j^{n+1} \pd[]{}{z_j}\,.
\end{align*}
Note that under this form, we observe that $\Omega_{\gamma}$ acts as a new effective natural frequency for the oscillators in $\mathcal{C}_\gamma$ that depends on the original natural frequencies, the phase lags and the weight matrix. One can now clearly notice that the cross-ratios related to each partially integrable parts are conserved quantities, since they are the joint invariants of $L_{-1}^\gamma, L_0^\gamma, L_1^\gamma$ for each $\gamma$. Also, $\mathcal{K}$ in Eq.~\eqref{eq:kur_PmuSI} thus corresponds to the Koopman generator of 
\begin{align}
     %\dot{z}_j(t) &=  \sum_{k=1}^N \mathcal{A}_{s(j) k}z_k(t) + i \Omega_{s(j)} z_j(t) - \left(\sum_{k=1}^N\bar{\mathcal{A}}_{s(j) k}\bar{z}_k(t)\right)z_j(t)^2\,,\hspace{-3cm} &&j\in  P_{\mathrm{I}}  \label{eq:kur_PI}\\ 
      \dot{z}_j(t) &=  \sum_{k=1}^N \mathcal{A}_{\gamma k}z_k(t) + i \Omega_{\gamma} z_j(t) - \left(\sum_{k=1}^N\bar{\mathcal{A}}_{\gamma k}\bar{z}_k(t)\right)z_j(t)^2\,,\hspace{-1.5cm} &&\forall j\in  \mathcal{C}_\gamma\,,\label{eq:kur_PI} %\,\,\forall \gamma \in\{1,...,c\} \label{eq:kur_PI}%\\
     % \dot{z}_\ell(t) &=  \sum_{k=1}^N A_{\ell k}z_k(t) - \left(\sum_{k=1}^N\bar{A}_{\ell k}\bar{z}_k(t)\right)z_\ell(t)^2\,,\hspace{-3cm} &&\ell \in  \mathcal{P} \label{eq:kur_P0}
\end{align}
with initial conditions
\begin{align}
    z_1(0) = \exp(i\vartheta_1)\,,\quad ...\quad, \quad z_N(0) = \exp(i\vartheta_N) \label{eq:kur_CI}
\end{align}
for $\vartheta_1,...,\vartheta_N \in \mathbb{R}$. % Note that it is not necessary to go back to the differential equations as above and below (see main text).

\vspace{0.3cm}
\noindent- \textit{Transforming the partially integrable equations to Riccati equations}
\vspace{0.3cm}

%Lemma~\ref{lem:correspondance_complexe_A} guarantees the unicity of the solution to problem~(\ref{eq:kur_PI}-\ref{eq:kur_CI}). 
For the given initial condition~\eqref{eq:kur_CI}, one can proceed as in Lemma~\ref{lem:correspondance_nonautonome} and relate the solution $(z_j(t))_{j\in \mathcal{C}_\gamma}$ of Eq.~\eqref{eq:kur_PmuSI} for each $\gamma\in\{1,...,c\}$ to the solution of the Riccati equations
\begin{align*}
    \dot{u}_j(t) &=  q_{\gamma}(t) + i \Omega_{\gamma} u_j(t) - \overline{q_{\gamma}(t)}u_j(t)^2\,,\quad u_j(0) = z_j(0)\,,\quad j\in \mathcal{C}_\gamma\,,
\end{align*}
with 
\begin{align*}
    q_{\gamma}(t) &= \rho_{\gamma}(z(t)) = \sum_{k=1}^N \mathcal{A}_{\gamma k}z_k(t)\,.
\end{align*}
Note that there are \textit{different} sets (since, generally, $q_{\gamma}(t) \neq q_{\nu}(t)$, $\gamma \neq \nu$) of \textit{identical} Riccati equations (although the initial condition for each of these identical Riccati equations varies in general). The Koopman generator of the above Riccati equations, for all $\gamma$, is 
\begin{align}\label{eq:generator_ricSI}
    \mathcal{R}_{\gamma}(t, z) = q_{\gamma}(t) L_{-1}^{\gamma} + i\Omega_\gamma L_0^{\gamma} - \overline{q_{\gamma}(t)}L_1^{\gamma}\,,
\end{align}
which is a tangent vector at $(t, z) := (t, z_1,...,z_N)$ with null component $\pd[]{}{t}$ or more simply, a tangent vector at $(z_1,...,z_N)$ with time-varying coefficients. For the sake of the next argument, let's write the Koopman generator $\mathcal{K}$ as $\mathcal{K}(z)$ (a tangent vector at $z_1,...,z_N$) to be more precise. From a differential geometry perspective, the vector field $w_\mathcal{K}$ along the solution curve $(z_j(t))_{j\in \mathcal{C}_\gamma}$ is the same as the vector field $w_\mathcal{R}$ along this curve, meaning that locally $\mathcal{K}_{\gamma}(z(t)) = \mathcal{R}_{\gamma}(t, z(t))$. In the following, we will write $\mathcal{R}_{\gamma}(t)$ rather than $\mathcal{R}_{\gamma}(t, z(t))$ to simplify the notation while insisting on the time dependence.

\vspace{0.3cm}
\noindent- \textit{Observables dynamics}
\vspace{0.3cm}

Under Koopman's perspective and a complexity science perspective (as argued in the paper, both are aligned), the natural approach is to consider the dynamics of the observables depending on the position of the oscillators, which include synchronization observables. The generator of the time evolution for these quantities is precisely the Koopman generator. In our case, the Koopman generators act on a space of functions on the $N$-torus, i.e.,
\begin{align*}
    f:\,\mathbb{T}^N \to \mathbb{C}\,.
\end{align*}
Among all the possible functions, we will be interested in the observables $\{g_{\gamma}\}_{\gamma=\{1,...,c\}}$ defined such that 
\begin{equation*}
\begin{tikzcd}
\mathbb{T}^N\drar["g_{\gamma}"]\dar["r_{\gamma}" left]  &  &
(z_j)_{j\in \mathcal{V}}\drar[maps to, "g_{\gamma}"] \dar["r_{\gamma}" left, maps to] & \\
\mathbb{T}^{n_{\gamma}} \rar[swap, "f_{\gamma}"] & \mathbb{C} &
(z_j)_{j\in \mathcal{C}_\gamma} \rar[maps to, swap, "f_{\gamma}"] &  c
\end{tikzcd}
\end{equation*}
The dynamics of these observables is 
\begin{align}\label{eq:observable_dyn}
    \dot{g}_\gamma(z(t)) = \mathcal{R}_{\gamma}(t)g_\gamma(z(t))\,,\qquad \forall \gamma\in\{1,...,c\}\,,
\end{align}
which are \textit{non-autonomous, linear, and uncoupled} differential equations. There at least two ways to approach such equations, but one turns out to be particularly useful.

\vspace{0.3cm}
\noindent- \textit{Magnus expansion}
\vspace{0.3cm}

To solve Eq.~\eqref{eq:observable_dyn} for a given $\gamma$, we use the Magnus expansion~\cite{Magnus1954, Blanes2009}. Using Theorem~4 in Ref.~\cite{Blanes2009} yields%and considering that $s=0 \leq t$ is the initial time~\footnote{Recall that the flot of a non-autonomous system really depends over two time parameters. Yet, setting the initial time to $s = 0$ is convenient and is done without loss of generality.} yields
\begin{align*}
    g_{\gamma}(z(t)) = \exp\left(\mathcal{L}_{\gamma}(t)\right)g_{\gamma}(z(0))\quad\text{with}\quad \mathcal{L}_{\gamma}(t) = \sum_{n=1}^\infty \mathcal{L}_n^\gamma(t)\,,
\end{align*}
where the first three terms of the expansion are
\begin{align*}
\mathcal{L}_1^\gamma(t) &= \int_{0}^{t} \mathcal{R}_{\gamma}(t_1) \, \dif t_1\\
\mathcal{L}_2^\gamma(t) &= \frac{1}{2} \int_{0}^{t} \int_{0}^{t_1} [\mathcal{R}_{\gamma}(t_1), \mathcal{R}_{\gamma}(t_2)] \, \dif t_2 \, \dif t_1\\
\mathcal{L}_3^\gamma(t) &= \frac{1}{6} \int_{0}^{t} \int_{0}^{t_1} \int_{0}^{t_2} \left( [\mathcal{R}_{\gamma}(t_1), [\mathcal{R}_{\gamma}(t_2), \mathcal{R}_{\gamma}(t_3)]] + [\mathcal{R}_{\gamma}(t_3), [\mathcal{R}_{\gamma}(t_2), \mathcal{R}_{\gamma}(t_1)]] \right) \dif t_3 \, \dif t_2 \, \dif t_1\,.
\end{align*}
%In the following, \textbf{we drop every index} $\gamma$ to simplify the notation, but it should be kept in mind that we perform these operations for each partially integrable parts individually. 
With Eq.~\eqref{eq:generator_ricSI}, the first term becomes
\begin{align*}
    \mathcal{L}_1^{\gamma}(t) %&= \left(\int_{t}^{s} q_1(s_1)\, \dif s_1 \right) L_{-1} + i\omega (s-t) L_0 - \left(\int_{t}^{s}q_{-1}(s_1) \, \dif s_1\right)L_1\\
    &= b_1^{\gamma}(t)L_{-1}^{\gamma} + 2iy_1^{\gamma}(t)L_0^{\gamma} - \overline{b_1^{\gamma}(t)}L_1^{\gamma}\,,
\end{align*}
where
\begin{align*}
    b_1^{\gamma}(t) = \int_{0}^{t} q_{\gamma}(t_1)\, \dif t_1\qquad \text{and} \qquad y_1^{\gamma}(t) = \frac{1}{2}\Omega_{\gamma} t\,.
\end{align*}
Then, the second term is 
\begin{align*}
    \mathcal{L}_2^{\gamma}(t) &=\frac{1}{2} \int_{0}^{t} \int_{0}^{t_1} [q_{\gamma}(t_1) L_{-1}^{\gamma} + i\Omega_{\gamma} L_0^{\gamma} - \overline{q_{\gamma}(t_1)} L_1^{\gamma}\,\,, \,\,q_{\gamma}(t_2) L_{-1}^{\gamma} + i\Omega_{\gamma} L_0^{\gamma} - \overline{q_{\gamma}(t_2)} L_1^{\gamma}] \, \dif t_2 \, \dif t_1
    \\& = \frac{1}{2} \int_{0}^{t} \int_{0}^{t_1} (i\Omega_{\gamma}(q_{\gamma}(t_1) - q_{\gamma}(t_2)) L_{-1} + 2 (\overline{q_{\gamma}(t_1)}q_{\gamma}(t_2) -  q_{\gamma}(t_1) \overline{q_{\gamma}(t_2)})L_0^{\gamma} -(- i\Omega_{\gamma}(\overline{q_{\gamma}(t_1)} - \overline{q_{\gamma}(t_2)})) L_1^{\gamma}) \, \dif t_2 \, \dif t_1\\
    &= b_2^{\gamma}(t)L_{-1}^{\gamma} + 2iy_2^{\gamma}(t)L_0^{\gamma} - \overline{b_2^{\gamma}(t)}L_1^{\gamma}
\end{align*}
with
\begin{align*}
    b_2^{\gamma}(t) = \frac{i\Omega_{\gamma}}{2}\int_{0}^{t} \int_{0}^{t_1}(q_{\gamma}(t_1)-q_{\gamma}(t_2)) \, \dif t_2 \, \dif t_1\qquad \text{and} \qquad
    y_2^{\gamma}(t) = \frac{1}{2i}\int_{0}^{t} \int_{0}^{t_1}(\overline{q_{\gamma}(t_1)}q_{\gamma}(t_2) -  q_{\gamma}(t_1) \overline{q_{\gamma}(t_2)})\, \dif t_2 \, \dif t_1\,.
\end{align*}
Ultimately, since the integrals only act on the coefficients of the operators, we have 
\begin{align*}
    \mathcal{L}_n^{\gamma}(t) = b_n^{\gamma}(t)L_{-1}^{\gamma} + 2iy_n^{\gamma}(t) L_0^{\gamma} - \overline{b_n^{\gamma}(t)} L_1^{\gamma}\,,
\end{align*}
which means that $\mathcal{L}(t)$ also has the expected form (see Ref.~\cite[p.162]{Blanes2009}) of an element of the Lie algebra of $\Aut(\mathbb{D})$:
\begin{align*}
    \mathcal{L}_{\gamma}(t) = B_{\gamma}(t)L_{-1} + 2iY_{\gamma}(t)L_0 - \overline{B_{\gamma}(t)}L_1\quad\text{with}\quad B_{\gamma}(t) = \sum_{n=1}^\infty b_n^{\gamma}(t)\,,\,\,Y_{\gamma}(t) = \sum_{n=1}^\infty y_n^{\gamma}(t)\,,
\end{align*}
and hence
\begin{align}\label{eq:g_exp}
    g_\gamma(z(t)) = \exp(B_{\gamma}(t)L_{-1} + 2iY_{\gamma}(t)L_0 - \overline{B_{\gamma}(t)}L_1)g_\gamma(z(0))\,.
\end{align}
We now aim to find the explicit form of the exponential map of the operator $B_{\gamma}(t)L_{-1} + 2iY_{\gamma}(t)L_0 - \overline{B_{\gamma}(t)}L_1$. % , which can be associated to a matrix in the matrix group $\PSU(1, 1)$ as we shall see.

\vspace{0.3cm}
\noindent- \textit{Matone's formula}
\vspace{0.3cm}

On this subject, a breakthrough was made by Matone in 2015~\cite{Matone2015}. As presented in subsection~\ref{subsec:matone}, one can adapt his result for $\PSU(1, 1)$ (Eq.~\eqref{eq:matoneSI})\,, and the exponential map in Eq.~\eqref{eq:g_exp} is only a few steps away from a direct application of Matone's formula.

To use the formula in our context, first define
\begin{align*}
    U_{\gamma}(t) := X_{\gamma} + \frac{i Y_{\gamma}(t)}{\nu(X_{\gamma})}\,,\quad
    V_{\gamma}(t) := \frac{B_{\gamma}(t)}{\nu(X_{\gamma})}\quad \text{with}\quad \nu(x) := \frac{\ln\left(\frac{x + \sqrt{x^2 - 1}}{x - \sqrt{x^2 -1}}\right)}{2\sqrt{x^2 - 1}} \in \mathbb{R}\,,
\end{align*}
where $X_{\gamma}$ is constrained by the hyperbolic condition
\begin{align}\label{eq:contrainte_hyperbolique_UVst}
    |U_{\gamma}(t)|^2 - |V_{\gamma}(t)|^2 = 1
\end{align}
for all $t$. The latter constraint has the more explicit form
\begin{align*}
    X_{\gamma}^2 + \frac{Y_{\gamma}(t)^2}{\nu(X_{\gamma})^2} - \frac{R_{\gamma}(t)^2}{\nu(X_{\gamma})^2} - 1 &= 0\,,
\end{align*}
considering that $V_{\gamma}(t) = (R_{\gamma}(t)/\nu(X_{\gamma}))\exp(i\Phi_{\gamma}(t))$ where $R_{\gamma}(t) = |B_{\gamma}(t)|$.
\begin{figure}[b]
    \centering
    \includegraphics[width=0.6\linewidth]{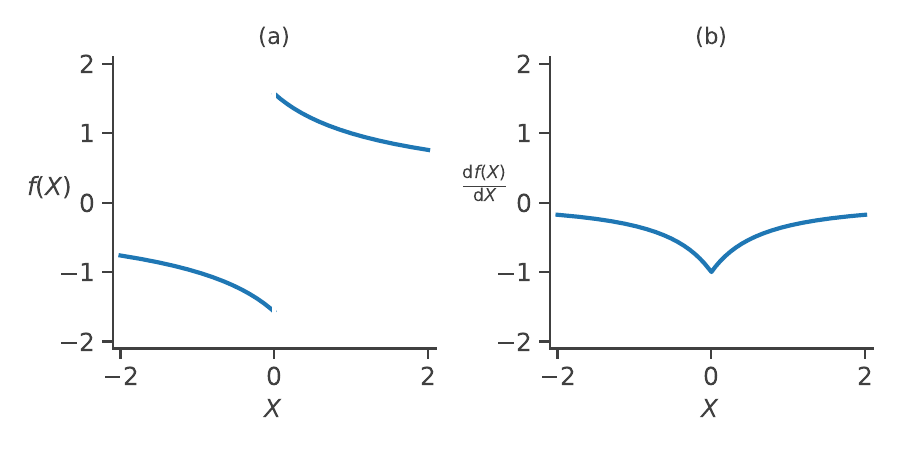}
    \caption{(a) Principal branch ($k = 0$) of $\nu(X) = f(X)$ with respect to $X$ and (b) its derivative.}
    \label{fig:nu_vs_X2}
\end{figure}
Since $\nu(X_{\gamma}) \neq 0$ for all $X_{\gamma}$ (see Fig.~\ref{fig:nu_vs_X2}), the constraint becomes
\begin{align*}
     (X_{\gamma}^2 - 1)\nu(X_{\gamma})^2 + Y_{\gamma}(t)^2 - R_{\gamma}(t)^2 &= 0\,.
\end{align*}
A priori, it is not clear if there exists a $X_{\gamma}$ that satisfies this equation. However,
\begin{align*}
    (X_{\gamma}^2 - 1)\nu(X_{\gamma})^2 = (X_{\gamma}^2 - 1)\frac{\ln^2\left(\frac{X_{\gamma} + \sqrt{X_{\gamma}^2 - 1}}{X_{\gamma} - \sqrt{X_{\gamma}^2 -1}}\right)}{4(X_{\gamma}^2 - 1)} = \frac{1}{4}\ln^2\left(\frac{X_{\gamma} + \sqrt{X_{\gamma}^2 - 1}}{X_{\gamma} - \sqrt{X_{\gamma}^2 -1}}\right) 
\end{align*}
and therefore, 
\begin{align*}
     \frac{1}{4}\ln^2\left(\frac{X_{\gamma} + \sqrt{X_{\gamma}^2 - 1}}{X_{\gamma} - \sqrt{X_{\gamma}^2 -1}}\right) + Y_{\gamma}(t)^2 - R_{\gamma}(t)^2 &= 0\,.
\end{align*}
The inversion of the last equation gives a much more elegant relation, i.e.,
\begin{align*}
    X_{\gamma} &= \pm \cosh(\sqrt{R_{\gamma}(t)^2 - Y_{\gamma}(t)^2}) = \left\{
\begin{aligned}
    \pm\cosh(\sqrt{|R_{\gamma}(t)^2 - Y_{\gamma}(t)^2|}) &\,\,\text{ if }\,\, R_{\gamma}(t)^2 - Y_{\gamma}(t)^2  \geq 0 \\
    \pm \cos(\sqrt{|R_{\gamma}(t)^2 - Y_{\gamma}(t)^2|}) &\,\,\text{ if }\,\,R_{\gamma}(t)^2 - Y_{\gamma}(t)^2  < 0
\end{aligned}
\right. \,\,.
%     \\&= \pm\cosh(\sqrt{|R_{\gamma}(t)^2 - Y_{\gamma}(t)^2|})\,\delta_{R_{\gamma}(t)^2 - Y_{\gamma}(t)^2  \geq 0} \pm \cos(\sqrt{|R_{\gamma}(t)^2 - Y_{\gamma}(t)^2|})\,\delta_{R_{\gamma}(t)^2 - Y_{\gamma}(t)^2  < 0}
\end{align*}
This demonstrates that for all $R_{\gamma}(t), Y_{\gamma}(t)$, there exists at least one real value $X_{\gamma}$ that satisfies the constraints. % (see the \href{https://www.desmos.com/calculator/1wcgz3thvz?lang=fr}{plot}). % and the verification done with \href{https://www.wolframalpha.com/input?i=inverse++%5Cfrac%7B1%7D%7B4%7D%5Cln%5E2%5Cleft%28%5Cfrac%7BX+%2B+%5Csqrt%7BX%5E2+-+1%7D%7D%7BX+-+%5Csqrt%7BX%5E2+-1%7D%7D%5Cright%29}{Wolfram Alpha}).
% ou bien, en réarrangeant,
% \begin{align*}
%     \frac{X}{\sqrt{X^2 - 1}} = -\,\frac{1+\exp(\pm 2\sqrt{R(t)^2 - Y(t)^2})}{1-\exp(\pm 2\sqrt{R(t)^2 - Y(t)^2})}
% \end{align*}
Altogether, one gets 
\begin{align*}
    g_{\gamma}(z(t)) = \exp[\nu(X_{\gamma})(V_{\gamma}(t)L_{-1} + (U_{\gamma}(t) - \overline{U_{\gamma}(t)})L_0 - \overline{V_{\gamma}(t)}L_1)]g_{\gamma}(z(0))
\end{align*}
and from Matone's formula, the time evolution of $g_{\gamma}$ is given by the disk automorphism with time-varying coefficients
\begin{align*}
    g_{\gamma}(z(t)) = K_{\phi_{0, t}}^{\mathrm{Ric}}[f](z(0)) = \frac{U_{\gamma}(t)g_{\gamma}(z(0)) + V_{\gamma}(t)}{\overline{V_{\gamma}(t)}g_{\gamma}(z(0)) + \overline{U_{\gamma}(t)}}\,,
\end{align*}
where $K^{\mathrm{Ric}}$ is the Koopman operator of the Riccati dynamics. In particular, the choice of observable $g_\gamma(z_1,...,z_N) = z_j$ for $j\in \mathcal{C}_\gamma$ implies that the solution for the Riccati dynamics (the one related to the Kuramoto model at the coinciding initial condition) is 
\begin{align}\label{eq:sol_gen_ricSI}
    z_j(t) = \phi_{0,t}^{\mathrm{Ric}}(z_j(0)) = \frac{U_{\gamma}(t)z_j(0) + V_{\gamma}(t)}{\overline{V_{\gamma}(t)}z_j(0) + \overline{U_{\gamma}(t)}}\,,\quad \forall j\in \mathcal{C}_\gamma\,,
\end{align}
where %we have reintroduced the indices $\gamma$ where they should be and 
$U_{\gamma}(t), V_{\gamma}(t)$ satisfy Eq.~\eqref{eq:contrainte_hyperbolique_UVst} for all $t$ and $\gamma$. 

There are infinitely many integrals to be solved to get $U_{\gamma}(t)$ and $V_{\gamma}(t)$, but the good news is that there is an alternative: interpreting them as a solution to a set of fewer differential equations than the original system. In such a way, instead of computing the integrals, one aims to find these reduced differential equations. This idea essentially goes back to the original theory presented by Watanabe and Strogatz in 1994~\cite{Watanabe1994}, but one can already observe that we did not (and will not) make any Ansatz whatsoever.

\vspace{0.3cm}
\noindent- \textit{Deducing Watanabe-Strogatz transformation}
\vspace{0.3cm}

Under this perspective, at $t = 0$, every solution $(U_{\gamma}(t), V_{\gamma}(t))$ of the yet-to-be-determined differential equations is constrained to start at $(U_{\gamma}(0), V_{\gamma}(0)) = (\pm 1, 0)$  \textit{for any initial conditions} $(z_j(0))_{j\in \mathcal{C}_\gamma} =: (\xi_j)_{j\in \mathcal{C}_\gamma} \in \mathbb{T}^{n_{\gamma}}$ in order for Eq.~\eqref{eq:sol_gen_ricSI} to be satisfied. It would however be more informative to set things in such a way that different initial conditions $(\xi_j)_{j\in \mathcal{C}_\gamma}$ of the Kuramoto model~\footnote{For different initial conditions, it is the trajectories in different Riccati dynamics that are related to the trajectories of the Kuramoto model. Also, one must remember that the goal is to come back to the Kuramoto model at some point.} leading to different trajectories are associated to different initial conditions of the disk automorphism coefficients (a consideration in this regard is presented in Ref.~\cite[section 4.2.1]{Watanabe1994}). To do so, one can express the initial conditions as
\begin{align*}
    \xi_j = \frac{a_{\gamma}w_j + b_{\gamma}}{\overline{b_{\gamma}}w_j + \overline{a
    _{\gamma}}}\,,\quad |a_{\gamma}|^2 - |b_{\gamma}|^2 = 1\,,
\end{align*}
where $w_j = e^{i\psi_j}$ ($\psi$ is not to be confused with others in the paper) for all $j\in \mathcal{C}_\gamma$, for all $\gamma\in\{1,...,c\}$ \footnote{We let $w_j$ for $j\in \mathcal{P}$ undefined (they are not used).}. Consequently,
\begin{align}\label{eq:ws_UVSI}
    z_j(t) = \frac{u_{\gamma}(t)w_j + v_{\gamma}(t)}{\overline{v_{\gamma}(t)}w_j + \overline{u_{\gamma}(t)}}\,,\quad \forall j\in \mathcal{C}_\gamma\,,
\end{align}
with
\begin{align*}
    u_{\gamma}(t) = a_{\gamma}U_{\gamma}(t) + \overline{b}_{\gamma}V_{\gamma}(t)\quad \text{and} \quad v_{\gamma}(t) = b_{\gamma}U_{\gamma}(t) + \overline{a}_{\gamma}V_{\gamma}(t)\,,
\end{align*}
where it is easy to demonstrate that $|u_{\gamma}(t)|^2 - |v_{\gamma}(t)|^2 = 1$. 

Moreover, $u_{\gamma}(t)$ and $v_{\gamma}(t)$ are not bounded and since we aim to find differential equations, it is convenient to make the change of coordinates
\begin{align*}
    u_{\gamma}(t) = \frac{\zeta_{\gamma}(t)^{1/2}}{\sqrt{1 - |Z_{\gamma}(t)|^2}}\,,\qquad v_{\gamma}(t) = \frac{\zeta_{\gamma}(t)^{-1/2}Z_{\gamma}(t)}{\sqrt{1 - |Z_{\gamma}(t)|^2}}\,,
\end{align*}
for the disk and the unit circle, where $\zeta_{\gamma}(t) = e^{i\varphi_\gamma(t)}$ and $Z_{\gamma}(t) \in \mathbb{D}$. This change of coordinates applied to Eq.~\eqref{eq:ws_UVSI} leads to 
\begin{align}\label{eq:auto_ricSI}
    z_j(t) = M_{Z_{\gamma}(t), \zeta_{\gamma}(t)}(w_j) = \frac{\zeta_{\gamma}(t)w_j + Z_{\gamma}(t)}{1 + \zeta_{\gamma}(t)\,\overline{Z_{\gamma}(t)}w_j}\quad \mathrm{with}\quad \zeta_{\gamma}(t) = e^{i\varphi_{\gamma}(t)},\, |Z_{\gamma}(t)| < 1\,,j\in \mathcal{C}_\gamma\,
\end{align}
for all $t$ and we have therefore deduced the Watanabe-Strogatz transformation (the transformation that leads to the real, original form~\cite{Watanabe1994} is straightforward to obtain by using trigonometric identities~\cite[IV, A]{Marvel2009}). 

From there, we observe that the introduction of $(w_j)_{j\in \mathcal{C}_\gamma} = (e^{i\psi_j})_{j\in \mathcal{C}_\gamma}$ gives the desired freedom, because Eq.~\eqref{eq:auto_ricSI} for all $j \in \mathcal{C}_\gamma$ gives a set of $n_{\gamma}$ constraints and there are $n_{\gamma} + 3$ real parameters:
\begin{align*}
    (\psi_j)_{j\in \mathcal{C}_\gamma}\,,\quad|Z_{\gamma}(0)|\,,\quad \arg(Z_{\gamma}(0))\,,\quad \varphi_{\gamma}(0)\,.
\end{align*}
To set the values of all these parameters, three other conditions can be added on the phases $(\psi_j)_{j\in \mathcal{C}_\gamma}$ in such a way that the initial conditions $z_j(0)=\xi_j$ for all $j \in \mathcal{C}_\gamma$ are associated to initial conditions $Z_{\gamma}(0), \varphi_{\gamma}(0)$. We refer the reader to the original work of Watanabe-Stroatz~\cite{Watanabe1994} and to the code in Ref.~\cite{Thibeault2026_koopman_kuramoto} for more details on how to fix the initial conditions. The rest of derivation is done in the main text. We shall now provide two concrete examples for the partial integration of the cross-ratio parts.

\section{Partial integration of basic case studies with conserved cross-ratios}
\label{subsec:basic_cases}
To begin with, we address the case of one partially integrable part of 4 vertices and one non-integrable part of one vertex.
\begin{example}
    Consider a graph of $N= 5$ vertices partitioned into $\{\mathcal{C}_1, \mathcal{P}\}$ with $\mathcal{C}_1 = \{1,2,3,4\}$, $\mathcal{P} = \{5\}$, and with complex weight matrix 
    \begin{align*}
    A = \begin{pmatrix}
        i\omega_1/2 & \mathcal{A}_2 & \mathcal{A}_3 & \mathcal{A}_4 & \mathcal{A}_5\\
        \mathcal{A}_1 & i\omega_2 /2 & \mathcal{A}_3 & \mathcal{A}_4 & \mathcal{A}_5\\
        \mathcal{A}_1 & \mathcal{A}_2 & i\omega_3/2 & \mathcal{A}_4 & \mathcal{A}_5\\
        \mathcal{A}_1 & \mathcal{A}_2 & \mathcal{A}_3 & i\omega_4/2 & \mathcal{A}_5\\
        A_{51} & A_{52} & A_{53} & A_{54} & i\omega/2
    \end{pmatrix}\,,
\end{align*}
where $\omega_1$ is fixed to some real value (the first oscillator is the reference oscillator)  and 
\begin{align*}
    \omega_j = \omega_1 + 2\Imag(\mathcal{A}_j - \mathcal{A}_1)\,,\quad j\in \{2,3,4\}\,.
\end{align*}
By construction, the effective natural frequency of each oscillator in $\mathcal{C}_1$ is $\Omega = \omega_1 - 2\Imag(\mathcal{A}_1)$ while the natural frequency of the fifth oscillator in $\mathcal{P}$ is $\omega$. In such a case, $c=1$ and $p = 1$, implying that the Kuramoto dynamics can be reduced to $n = 3c + p = 4$ real differential equations.
These equations are, in complex form,
\begin{align*}  
    \dot{Z} &= i\Omega Z + F(Z, \zeta, z_5) - \overline{F(Z, \zeta, z_5)}Z^2\,,\\
    \dot{\zeta} &= (i\Omega + F(Z, \zeta, z_5)\overline{Z} - \overline{F(Z, \zeta, z_5)}\,Z)\,\zeta\,,\\
    \dot{z}_5 &= i\omega z_5 + G(Z, \zeta;\bm{w}) - \overline{G(Z, \zeta;\bm{w})}z_5^2\,,
\end{align*}
where we have used $\sum_{k\in \mathcal{P}} (A_{5 k}z_k - \bar{A}_{5 k}\overline{z_k}z_5^2) = (A_{55} - \bar{A}_{55})z_5 = i\omega z_5$ and
\begin{align*}
    M_{Z, \zeta }(w_j) = \frac{\zeta w_j + Z}{1 + \zeta\,\overline{Z} w_j}\,,\quad G(Z, \zeta;\bm{w}) = \sum_{k=1}^4 A_{5 k}M_{Z, \zeta}(w_k)\,,\qquad
    F(Z, \zeta, \bm{z}_{\mathcal{P}}) = \mathcal{A}_{5}z_5 + \sum_{k=1}^4 \mathcal{A}_{k}M_{Z, \zeta}(w_k)\,.
\end{align*}
We also have $n_1 = 4$ and there is $q = N - n = n_1 - 3 = 1$ constant of motion in the system: there is only one functionally independent conserved cross-ratio, say
\begin{align*}
    c_{1234}(z) = \frac{(z_{3}-z_{1})(z_{4}-z_{2})}{(z_{3}-z_{2})(z_{4}-z_{1})}\,.
\end{align*}
\end{example}
Let's now present a more general example in terms of network structure.
\begin{example}
    Consider a graph with $N = 13$ vertices partitioned into the parts $\mathcal{P} = \{11, 12, 13\}$ and $\mathcal{C}_1 = \{1,2,3,4\}$, $\mathcal{C}_2 = \{5,6,7,8,9,10\}$. Then, we can construct the general complex weight matrix
    \begin{equation*}
        A = \begin{pmatrix}
        i\omega_1/2 & \mathcal{A}_{1,2} & \mathcal{A}_{1,3} & \mathcal{A}_{1,4} & \mathcal{A}_{1,5} & \mathcal{A}_{1,6}& \mathcal{A}_{1,7} & \mathcal{A}_{1,8}& \mathcal{A}_{1,9} & \mathcal{A}_{1,10} & \mathcal{A}_{1,11} & \mathcal{A}_{1,12} & \mathcal{A}_{1,13}\\
        \mathcal{A}_{1,1} & i\omega_2/2 & \mathcal{A}_{1,3} & \mathcal{A}_{1,4} & \mathcal{A}_{1,5} & \mathcal{A}_{1,6}& \mathcal{A}_{1,7} & \mathcal{A}_{1,8}& \mathcal{A}_{1,9} & \mathcal{A}_{1,10} & \mathcal{A}_{1,11} & \mathcal{A}_{1,12} & \mathcal{A}_{1,13}\\
        \mathcal{A}_{1,1} & \mathcal{A}_{1,2} & i\omega_3/2 & \mathcal{A}_{1,4} & \mathcal{A}_{1,5} & \mathcal{A}_{1,6}& \mathcal{A}_{1,7} & \mathcal{A}_{1,8}& \mathcal{A}_{1,9} & \mathcal{A}_{1,10} & \mathcal{A}_{1,11} & \mathcal{A}_{1,12} & \mathcal{A}_{1,13}\\
        \mathcal{A}_{1,1} & \mathcal{A}_{1,2} & \mathcal{A}_{1,3} & i\omega_4/2 & \mathcal{A}_{1,5} & \mathcal{A}_{1,6}& \mathcal{A}_{1,7} & \mathcal{A}_{1,8}& \mathcal{A}_{1,9} & \mathcal{A}_{1,10} & \mathcal{A}_{1,11} & \mathcal{A}_{1,12} & \mathcal{A}_{1,13}\\
        \hline 
        \mathcal{A}_{2,1} & \mathcal{A}_{2,2} & \mathcal{A}_{2,3} & \mathcal{A}_{2,4} & i\omega_5/2 & \mathcal{A}_{2,6}& \mathcal{A}_{2,7} & \mathcal{A}_{2,8}& \mathcal{A}_{2,9} & \mathcal{A}_{2,10} & \mathcal{A}_{2,11} & \mathcal{A}_{2,12} & \mathcal{A}_{2,13}\\
        \mathcal{A}_{2,1} & \mathcal{A}_{2,2} & \mathcal{A}_{2,3} & \mathcal{A}_{2,4} & \mathcal{A}_{2,5} & i\omega_6/2 & \mathcal{A}_{2,7} & \mathcal{A}_{2,8}& \mathcal{A}_{2,9} & \mathcal{A}_{2,10} & \mathcal{A}_{2,11} & \mathcal{A}_{2,12} & \mathcal{A}_{2,13}\\
        \mathcal{A}_{2,1} & \mathcal{A}_{2,2} & \mathcal{A}_{2,3} & \mathcal{A}_{2,4} & \mathcal{A}_{2,5} & \mathcal{A}_{2,6}& i\omega_7/2 & \mathcal{A}_{2,8}& \mathcal{A}_{2,9} & \mathcal{A}_{2,10} & \mathcal{A}_{2,11} & \mathcal{A}_{2,12} & \mathcal{A}_{2,13}\\
        \mathcal{A}_{2,1} & \mathcal{A}_{2,2} & \mathcal{A}_{2,3} & \mathcal{A}_{2,4} & \mathcal{A}_{2,5} & \mathcal{A}_{2,6}& \mathcal{A}_{2,7} & i\omega_8/2& \mathcal{A}_{2,9} & \mathcal{A}_{2,10} & \mathcal{A}_{2,11} & \mathcal{A}_{2,12} & \mathcal{A}_{2,13}\\
        \mathcal{A}_{2,1} & \mathcal{A}_{2,2} & \mathcal{A}_{2,3} & \mathcal{A}_{2,4} & \mathcal{A}_{2,5} & \mathcal{A}_{2,6}& \mathcal{A}_{2,7} & \mathcal{A}_{2,8}& i\omega_9/2 & \mathcal{A}_{2,10} & \mathcal{A}_{2,11} & \mathcal{A}_{2,12} & \mathcal{A}_{2,13}\\
        \mathcal{A}_{2,1} & \mathcal{A}_{2,2} & \mathcal{A}_{2,3} & \mathcal{A}_{2,4} & \mathcal{A}_{2,5} & \mathcal{A}_{2,6}& \mathcal{A}_{2,7} & \mathcal{A}_{2,8}& \mathcal{A}_{2,9} & i\omega_{10}/2 & \mathcal{A}_{2,11} & \mathcal{A}_{2,12} & \mathcal{A}_{2,13}\\
        \hline
        A_{11,1} & A_{11,2} & A_{11,3} & A_{11,4} & A_{11,5}& A_{11,6}& A_{11,7}& A_{11,8}& A_{11,9}& A_{11,10}& i\omega_{11}/2& A_{11,12}& A_{11,13}\\
        A_{12,1} & A_{12,2} & A_{12,3} & A_{12,4} & A_{12,5}& A_{12,6}& A_{12,7}& A_{12,8}& A_{12,9}& A_{12,10}& A_{12,11}& i\omega_{12}/2& A_{12,13}\\
        A_{13,1} & A_{13,2} & A_{13,3} & A_{13,4} & A_{13,5}& A_{13,6}& A_{13,7}& A_{13,8}& A_{13,9}& A_{13,10}& A_{13,11}& A_{13,12}& i\omega_{13}/2
    \end{pmatrix}\,,
\end{equation*}
    where
    \begin{align*}
    \omega_j = \begin{cases} 
    \text{arbitrary real number} & \text{if } j\in\{1, 5, 11, 12, 13\}\\
    \omega_1 + 2\Imag(\mathcal{A}_{1,j} - \mathcal{A}_{1,1})& \text{if } j\in \{2,3,4\}, \\
    \omega_5 + 2\Imag(\mathcal{A}_{2,j} - \mathcal{A}_{2,5}) & \text{if } j\in \{6,7,8,9,10\}.
\end{cases}
\end{align*}
The effective natural frequencies within each partially integrable part are $\Omega_1 = \omega_1 - 2\Imag(\mathcal{A}_{1,1})$ and $\Omega_2 = \omega_5 - 2\Imag(\mathcal{A}_{2,5})$. Therefore, $c=2$, $p = 3$ meaning that the Kuramoto dynamics can be reduced to $n = 3c + p = 9$ real differential equations. In complex form, they are
\begin{align*}
    \dot{Z}_1 &= F_1(\bm{Z}, \bm{\zeta}, \bm{z}_{\mathcal{P}}) + i\Omega_1 Z_1 - \overline{F_1(\bm{Z}, \bm{\zeta}, \bm{z}_{\mathcal{P}})}Z_1^2\,, \\%&&\forall \gamma \in \{1,...,c\}\\
    \dot{\zeta}_1 &= (i\Omega_1 + F_1(\bm{Z}, \bm{\zeta}, \bm{z}_{\mathcal{P}})\overline{Z}_1 - \overline{F_1(\bm{Z}, \bm{\zeta}, \bm{z}_{\mathcal{P}})}\,Z_1)\,\zeta_1\,,\\
    \dot{Z}_2 &= F_2(\bm{Z}, \bm{\zeta}, \bm{z}_{\mathcal{P}}) + i\Omega_2 Z_2 - \overline{F_2(\bm{Z}, \bm{\zeta}, \bm{z}_{\mathcal{P}})}Z_2^2\,, \\%&&\forall \gamma \in \{1,...,c\}\\
    \dot{\zeta}_2 &= (i\Omega_2 + F_2(\bm{Z}, \bm{\zeta}, \bm{z}_{\mathcal{P}})\overline{Z}_2 - \overline{F_2(\bm{Z}, \bm{\zeta}, \bm{z}_{\mathcal{P}})}\,Z_2)\,\zeta_2\,,\\
    \dot{z}_\ell &= \sum_{k\in \mathcal{P}} (A_{\ell k}z_k - \bar{A}_{\ell k}\overline{z_k}z_\ell^2) + G_{\ell}(\bm{Z}, \bm{\zeta};\bm{w}) - \overline{G_{\ell}(\bm{Z}, \bm{\zeta};\bm{w})}z_\ell^2\,,%\quad&&\forall \ell \in  \mathcal{P}\,,
\end{align*}
where $M_{Z_{\gamma}, \zeta_{\gamma}}(w_j) = \frac{\zeta_{\gamma}w_j + Z_{\gamma}}{1 + \zeta_{\gamma}\,\overline{Z}_{\gamma}w_j}$ and
\begin{align*}
    G_{\ell}(\bm{Z}(t), \bm{\zeta}(t);\bm{w}) = \sum_{\delta =1}^2\sum_{k\in \mathcal{C}_\delta} A_{\ell k}M_{Z_{\delta}(t), \zeta_{\delta}(t)}(w_k)\,,\qquad
    F_{\gamma}(\bm{Z}, \bm{\zeta}, \bm{z}_{\mathcal{P}}) = \sum_{k=11}^{13} \mathcal{A}_{\gamma k}z_k + \sum_{\delta=1}^2 \sum_{k\in \mathcal{C}_\delta} \mathcal{A}_{\gamma k}M_{Z_{\delta}, \zeta_{\delta}}(w_k)\,.
\end{align*}
    Since $n_1 = 4$ and $n_2 = 6$, there are $q = (n_1 - 3) + (n_2 -3) = 4$ constants of motion. The following cross-ratios are functionally independent constants of motion:
    \begin{align*}
        c_{1,2,3,4}(z) &= \frac{(z_{3}-z_{1})(z_{4}-z_{2})}{(z_{3}-z_{2})(z_{4}-z_{1})}
    \end{align*}
    for the part $\mathcal{C}_1$ (i.e., $\mathcal{C}_1$ admits $n_1 -3 = 1$ constant of motion) and
    \begin{align*}
        c_{5,6,7,8}(z) = \frac{(z_{7}-z_{5})(z_{8}-z_{6})}{(z_{7}-z_{6})(z_{8}-z_{5})}\,,\qquad
        c_{6,7,8,9}(z) = \frac{(z_{8}-z_{6})(z_{9}-z_{7})}{(z_{8}-z_{7})(z_{9}-z_{6})}\,,\qquad
        c_{7,8,9,10}(z) = \frac{(z_{9}-z_{7})(z_{10}-z_{8})}{(z_{9}-z_{8})(z_{10}-z_{7})}
    \end{align*}
    for the part $\mathcal{C}_2$ (i.e., $\mathcal{C}_2$ admits $n_2 - 3 = 3$ constants of motion).
    
\end{example}

%\clearpage
%\bibliography{mybib}

\end{document}